\numberwithin{equation}{section}
\newtheoremstyle{my}{1.5em}{0.5em}{\em}{}{\sc}{.}{0.5em}{}
\newtheoremstyle{your}{1.5em}{0.5em}{}{}{\sc}{.}{0.5em}{}
\theoremstyle{my}
\theoremstyle{my}
\newtheorem{thm}{Theorem}[section]
\newtheorem{Theorem}[thm]{Theorem}
\newtheorem*{Theorem*}{Theorem}
\newtheorem{Corollary}[thm]{Corollary}
\newtheorem*{corollary*}{Corollary}
\newtheorem{Lemma}[thm]{Lemma}
\newtheorem{Proposition}[thm]{Proposition}
\newtheorem*{conjecture*}{Conjecture}
\newtheorem*{question*}{Question}
\newtheorem*{definitions*}{Definitions}
\newtheorem*{rem*}{Remark}
\newtheorem*{remark*}{Remark}
\newtheorem*{remarks*}{Remarks}
\newtheorem*{example*}{Example}
\newtheorem*{examples*}{Examples}
\newtheorem*{convention*}{Convention}
\newtheorem*{conventions*}{Conventions}
\newtheorem*{Note*}{Note}
\newtheorem*{exercise*}{Exercise}
\newtheorem*{bibliographical-note*}{Bibliographical note}
\theoremstyle{your}
\newtheorem{Remark}[thm]{Remark}
\newcommand{\R}{\mathbb{R}}
\newcommand{\Z}{\mathbb{Z}}
\newcommand{\C}{\mathbb{C}}
\newcommand{\ev}{\operatorname{ev}}
\newcommand{\pa}{\partial}
\newcommand{\bR}{\mathbb{R}}
\newcommand{\bZ}{\mathbb{Z}}
\newcommand{\bC}{\mathbb{C}}
\newcommand{\bP}{\mathbb{P}}
\renewcommand{\ker}{\mathrm{ker}}
\newcommand{\scrF}{\EuScript{F}}
\newcommand{\scrW}{\EuScript{W}}
\newcommand{\MM}{\mathcal{M}}
\newcommand{\FF}{\mathcal{F}}
\newcommand{\BB}{\mathcal{B}}
\newcommand{\XX}{\mathcal{X}}
\newcommand{\NN}{\mathcal{N}}
\newcommand{\TT}{\mathcal{T}}
\newcommand{\DD}{\mathcal{D}}
\newcommand{\st}{\mathrm{st}}
\newcommand{\pl}{\mathrm{pl}}
\newcommand{\Pre}{\operatorname{PG}}
\title{Lagrangian exotic spheres}
\author{Tobias Ekholm}
\address{Department of mathematics Uppsala University, Box 480, 751 06 Uppsala, Sweden \newline
Institut Mittag-Leffler, Aurav. 17, 182 60 Djursholm, Sweden}
\email{tobias.ekholm\@@math.uu.se}
\author{Thomas Kragh}
\address{Department of mathematics Uppsala University, Box 480, 751 06 Uppsala, Sweden \newline
Institut Mittag-Leffler, Aurav. 17, 182 60 Djursholm, Sweden}
\email{thomas.kragh\@@math.uu.se}
\author{Ivan Smith}
\address{Centre for Mathematical Sciences, University of Cambridge, \newline Wilberforce Road, CB3 0WB, United Kingdom}
\email{is200\@@cam.ac.uk} 
\date{8th March 2015. v2: 17th September 2015.}
\thanks{TE is partially supported by the Knut and Alice Wallenberg Foundation as a Wallenberg Scholar and by the Swedish Research Council, 2012-2365.}
\begin{document}
\thispagestyle{empty}
\maketitle

\begin{abstract} 
Let $k>2$. We prove that the cotangent bundles $T^*\Sigma$ and $T^*\Sigma'$ of oriented homotopy $(2k-1)$-spheres  $\Sigma$ and $\Sigma'$ are symplectomorphic only if $[\Sigma] =  [\pm\Sigma'] \in \Theta_{2k-1} / bP_{2k}$, where $\Theta_{2k-1}$ denotes the group of oriented homotopy $(2k-1)$-spheres under connected sum, $bP_{2k}$ denotes the subgroup of those that bound a parallelizable $2k$-manifold, and where $-\Sigma$ denotes $\Sigma$ with orientation reversed. We further show  that if $n=4k-1$ and $\bR\bP^n \#\Sigma$ admits a Lagrangian embedding in $\bC\bP^n$, then $[\Sigma\#\Sigma] \in bP_{4k}$. 
The proofs build on \cite{Abouzaid} and \cite{EkholmSmith} in combination with a new cut-and-paste argument; that also
yields some interesting explicit exact Lagrangian embeddings, for instance of the sphere $S^n$ into the plumbing $T^*\Sigma^n \#_{\mathrm{pl}} T^*\Sigma^n$ of cotangent bundles of certain exotic spheres. As another application, we show that there are re-parameterizations of the zero-section in the cotangent bundle of a sphere that are not Hamiltonian isotopic (as maps rather than as submanifolds) to the original zero-section.  
\end{abstract}

\maketitle

\section{Introduction}
Arnold's ``nearby Lagrangian conjecture'' asserts that any exact Lagrangian submanifold of the cotangent bundle $T^{\ast} M$ of a closed smooth manifold $M$, with its standard symplectic structure, is Hamiltonian isotopic to the $0$-section. It would imply the weaker statement that if a closed smooth manifold $N$ admits an exact Lagrangian embedding into $T^{\ast}M$ then $M$ and $N$ are diffeomorphic;  it is known \cite{FSS, Nadler, Abouzaid:htpy, Kragh} under these hypotheses that $M$ and $N$ are homotopy equivalent. The weaker statement furthermore  implies that if the cotangent bundle $T^{\ast}N$ is symplectomorphic to $T^{\ast}M$ then $N$ is diffeomorphic to $M$. (To see this, consider the image of the $0$-section in $N$ under a symplectomorphism to $T^{\ast}M$.)   
In light of these results and conjectures, it is natural to ask to what extent the symplectic topology of $T^*M$ remembers the smooth structure on $M$.

Let $\Theta_n$ denote the group of oriented homotopy $n$-spheres, where $n\geq 5$.  The group operation is connect sum, and the inverse of an element is given by reversing its orientation.  There is a subgroup $bP_{n+1} \subset \Theta_n$ of homotopy spheres which bound parallelizable manifolds, see \cite{KM}.

\begin{Theorem} \label{Thm:One}
Let $n>4$ be an odd integer. If $\Sigma$ and $\Sigma'$ belong to $\Theta_n$ and $T^*\Sigma$ is symplectomorphic to $T^*\Sigma'$, then $[\Sigma] = \pm [\Sigma'] \in \Theta_{n}/bP_{n+1}$.
\end{Theorem}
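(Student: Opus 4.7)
The plan is to start from a symplectomorphism $\phi\colon T^*\Sigma\to T^*\Sigma'$ and transfer the zero section: set $L:=\phi(\Sigma)$, a closed exact Lagrangian submanifold of $T^*\Sigma'$ that is diffeomorphic to $\Sigma$. It therefore suffices to prove: if $\Sigma,\Sigma'\in\Theta_n$ and $\Sigma$ admits an exact Lagrangian embedding into $T^*\Sigma'$, then $[\Sigma]=\pm[\Sigma']\in\Theta_n/bP_{n+1}$. By \cite{FSS,Nadler,Abouzaid:htpy,Kragh} one already knows that $L$ is homotopy equivalent to $\Sigma'$, hence homeomorphic to $S^n$; what remains is to pin down its diffeomorphism type modulo $bP_{n+1}$.

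The essential input from \cite{Abouzaid} is the framed-bordism refinement of the nearby Lagrangian conjecture in $T^*S^n$: a closed exact Lagrangian there is stably framed cobordant to $S^n$ and hence differs from $S^n$ only by an element of $bP_{n+1}\subset\Theta_n$. To exploit this, I would convert $L\subset T^*\Sigma'$ into a closed exact Lagrangian $L'\subset T^*S^n$ of a predictable smooth type by a cut-and-paste construction. Writing $\Sigma'=D_+\cup_\partial D_-$ along an equatorial $(n{-}1)$-sphere gives a corresponding decomposition of $T^*\Sigma'$; gluing this to a mirror-image decomposition of $T^*(-\Sigma')$ produces the cotangent bundle of $\Sigma'\#(-\Sigma')=S^n$. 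Pairing $L$ (on one side) with the zero section of $T^*(-\Sigma')$ (on the other) yields a closed Lagrangian $L'\subset T^*S^n$ diffeomorphic to $L\#(-\Sigma')\cong \Sigma\#(-\Sigma')$. Applying \cite{Abouzaid} to $L'$ then gives $[\Sigma\#(-\Sigma')]\in bP_{n+1}$, i.e.\ $[\Sigma]=[\Sigma']$ in $\Theta_n/bP_{n+1}$. The sign $\pm$ in the statement reflects the choice of orientation for $L$ relative to $\Sigma'$: reversing one or the other throughout the argument interchanges $\Sigma'$ with $-\Sigma'$.

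The main obstacle is carrying out this surgery symplectically so that the result is genuinely an exact Lagrangian in a standard $T^*S^n$. I would pick a contact-type hypersurface in $T^*\Sigma'$ lying over an equatorial sphere of $\Sigma'$, put $L$ in a normal form near it using a Weinstein neighborhood, and match the configuration to its mirror image in $T^*(-\Sigma')$. Exactness at the seam has to be arranged by matching primitives of the Liouville form on $L$ and on the zero section along the gluing locus, possibly after a compactly supported Hamiltonian isotopy in a collar; and one must identify the glued symplectic manifold with the standard $T^*S^n$ in its canonical Liouville form. This is the step in which I expect the framework of \cite{EkholmSmith}, combined with the plumbing viewpoint advertised in the abstract, to play a decisive role; once it is in place, the theorem follows by invoking Abouzaid's result for $T^*S^n$.
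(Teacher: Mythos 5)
Your overall strategy is aligned with the paper's: reduce, via a symplectic cut-and-paste in the base, from an exact Lagrangian $\Sigma$ in $T^*\Sigma'$ to an exact Lagrangian of known diffeomorphism type (representing $[\Sigma]-[\Sigma']$) in the standard $T^*S^n$, and then apply a bordism-theoretic constraint to conclude that class lies in $bP_{n+1}$. However, there are two concrete gaps.

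First, the gluing construction you propose does not work as described. You want to pair $L=\phi(\Sigma)\subset T^*\Sigma'$ against the zero section of a mirror copy $T^*(-\Sigma')$ along a collar over an equatorial sphere, but there is no reason for $L$ to agree (or be Hamiltonian-isotopic to agree) with the zero section of $T^*\Sigma'$ near the hypersurface over the equator: arranging this would essentially presuppose the nearby Lagrangian conjecture. What the paper does instead is localize to a small disk $D_q\subset\Sigma'$ over which $L$ is graphical, which it is automatically by Sard's lemma (Lemma~\ref{lma:graph}). Crucially, $L$ is then a \emph{multi-sheeted} graph: the projection $L\to\Sigma'$ has degree $\pm 1$, so over $D_q$ one has $2k+1$ sheets with $k+1$ orientation-preserving and $k$ orientation-reversing. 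After putting these sheets into constant-$1$-form normal form with pairwise distinct norms, the paper cuts out $T^*B_{1/2}$ and reglues by the inverse clutching diffeomorphism of $\Sigma'$ (turning the base into $S^n$), and then extends the sheets across the regluing region using the $h$-cobordism theorem (Lemma~\ref{lma:hcobordism}) together with the orthonormal-rescaling trick to keep the extended sheets disjoint (Lemma~\ref{Lem:KraghTrick}). Your proposal does not engage with the multi-sheetedness at all, nor with how to extend (rather than match) the Lagrangian across the regluing region.

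Second, and independently, you invoke Abouzaid's theorem for all odd $n>4$, but \cite{Abouzaid} only covers $n=4k+1$ (i.e.\ $\pi_{n}(U/O)=\mathbb{Z}$ in the relevant Bott frame). For $n=4k-1$ a genuinely new ingredient is required: the paper proves Theorem~\ref{thm:4k-1}, that a homotopy $(4k-1)$-sphere admitting a Lagrangian embedding into $T^*S^{4k-1}$ lies in $bP_{4k}$, by passing to $S^1\times\Sigma\subset T^*(S^1\times S^{4k-1})$ and then into $\bC^{4k}$ via a neighborhood of the Maslov-$4k$ Whitney surgery, following \cite{EkholmSmith}. That argument needs the Lagrangian Gauss map computation, which in dimension $8k+3$ relies on the regular-homotopy results of \cite{AbKr} because $\pi_{8k+3}(U/O)=\mathbb{Z}_2\ne 0$. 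Without this half of the input, your argument only covers the dimensions already handled by \cite{Abouzaid}, so the theorem as stated would not follow.
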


In other words, perhaps after reversing orientation, $\Sigma$ and $\Sigma'$ agree in the quotient group $\Theta_n / bP_{n+1}$. Theorem \ref{Thm:One} extends the following groundbreaking result of Abouzaid \cite{Abouzaid}: if $n=4k+1$, if $\Sigma \in \Theta_n$, and if $T^*\Sigma$ is symplectomorphic to $T^*S^n$ then $[\Sigma] \in bP_{n+1}$. Abouzaid constructed a parallelizable filling of $\Sigma$ from moduli spaces of holomorphic disks.  Using similar techniques, \cite{EkholmSmith} showed that if $n=8k$, if $\Sigma \in \Theta_{n}$, and if $T^*(S^1 \times S^{8k-1})$ and $T^*((S^1 \times S^{8k-1})\# \Sigma)$ are symplectomorphic, then $\Sigma \in bP_{n+1}$. 

The proof of Theorem \ref{Thm:One} has two main ingredients. First, we introduce a cut-and-paste argument that allows one to pass from distinguishing the cotangent bundle of an exotic sphere from that of the standard sphere to distinguishing cotangent bundles of two exotic spheres. Together with Abouzaid's result that establishes Theorem \ref{Thm:One} in dimensions $n=4k+1$. Second, a modification 
of the argument of \cite{EkholmSmith} allows us to study cotangent bundles of products $S^1 \times \Sigma$, where $\Sigma \in \Theta_n$ is a homotopy $n$-sphere, rather than of connect sums $(S^1 \times S^{n}) \# \Sigma'$ for $\Sigma' \in \Theta_{n+1}$. That modification, combined with results on regular homotopy classes of exact Lagrangian embeddings in cotangent bundles \cite{AbKr} for $n=8k+3$, then covers the dimensions $n=4k-1$, see Theorem \ref{thm:4k-1}. The results on $S^1\times \Sigma$ can be transported to a result on Lagrangian homotopy real projective spaces in complex projective space $\bC\bP^n$: the boundary of a tubular neighborhood of $\C \bP^{n}\subset \C \bP^{n+1}$ restricted to such a Lagrangian in $\C \bP^{n}$ gives a Lagrangian in $\bC\bP^{n+1}\backslash \bC\bP^n\approx B^{2n+2} \subset \C^{n+1}$ (``Biran's circle bundle construction"), to which the previous results can be applied. 

\begin{Theorem} \label{Thm:Two}
Let $n=4k-1$ and $\Sigma \in \Theta_{4k-1}$ be an oriented homotopy sphere.  If $\bR\bP^n\#\Sigma$ admits a Lagrangian embedding in $\bC\bP^n$, then $[\Sigma \# \Sigma] \in bP_{4k}$.
\end{Theorem}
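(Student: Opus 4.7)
The plan is to apply Biran's circle bundle construction to the given Lagrangian $L := \bR\bP^n \# \Sigma \hookrightarrow \bC\bP^n$ and then feed the resulting Lagrangian into the framework of Theorem~\ref{thm:4k-1}. Since $\bC\bP^n$ embeds as a degree-one Donaldson divisor in $\bC\bP^{n+1}$, with a Donaldson model disjoint from $L$, Biran's construction produces a monotone Lagrangian embedding $P \hookrightarrow \bC\bP^{n+1}\setminus\bC\bP^n \cong B^{2n+2} \subset \bC^{n+1}$, where $P \to L$ is the unit circle bundle of $\mathcal{O}(1)|_L$, equivalently the restriction of the Hopf bundle $S^{2n+1} \to \bC\bP^n$ to $L$.

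Next I would pin down the topology of $P$. Since $n = 4k-1$ is odd the antipodal map on $S^n$ preserves orientation, so $L$ is orientable with universal cover $\widetilde{L} = \Sigma \# \Sigma$ (the two lifted copies of $\Sigma$ are glued with matching orientations). The Euler class of $P \to L$ is the restriction of the hyperplane class of $\bC\bP^n$, generating $H^2(L; \bZ) = \bZ/2$; it vanishes on $\widetilde{L}$ because $H^2(\Sigma \# \Sigma; \bZ) = 0$, so the double cover $\widetilde{L} \to L$ pulls back to a double cover $\widetilde{P} \cong S^1 \times (\Sigma \# \Sigma) \to P$. In particular $\pi_1(P) \cong \bZ$, surjecting onto $\pi_1(L) = \bZ/2$ by reduction mod $2$, and $P$ itself is the mapping torus of the deck involution on $\Sigma \# \Sigma$.

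To invoke the results underlying Theorem~\ref{thm:4k-1}, I would feed $P \subset \bC\bP^{n+1}$ into the modified EkholmSmith disc-counting framework. Because $\widetilde{P} \cong S^1 \times (\Sigma \# \Sigma)$ is a product of a circle and a homotopy sphere, $P$ sits precisely in the scope of hypotheses analogous to those of Theorem~\ref{thm:4k-1}. Counts of Maslov-two pseudo-holomorphic discs with boundary on $P$ in $\bC\bP^{n+1}$ (the minimum Maslov number of $P$ being $\gcd(n+1, 2) = 2$ when $n$ is odd), together with evaluation maps and an equivariant lift to the double cover, should produce a parallelizable cobordism whose boundary component is $\Sigma \# \Sigma$, giving $[\Sigma \# \Sigma] \in bP_{4k}$.

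The hard part will be adapting the EkholmSmith analysis from the exact cotangent setting of $T^*(S^1 \times S^n)$ to the closed monotone setting of $P \subset \bC\bP^{n+1}$: identifying and counting the relevant Maslov-two discs (expected to come from lines in $\bC\bP^{n+1}$ meeting the Donaldson $\bC\bP^n$ once, via Biran's description), verifying transversality and orientation of their moduli spaces, handling the $\bZ/2$ deck action equivariantly so that the resulting cobordism is genuinely parallelizable rather than parallelizable only up to a twist, and confirming the Abouzaid--Kragh regular-homotopy-class hypothesis for the embedding $P \hookrightarrow \bC^{n+1}$ that enters the proof of Theorem~\ref{thm:4k-1}.
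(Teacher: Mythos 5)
Your first step — Biran's circle-bundle construction sending $L = \bR\bP^n\#\Sigma$ to a Lagrangian $\hat L \subset \bC\bP^{n+1}\setminus\bC\bP^n \cong B^{2n+2}\subset\bC^{n+1}$ — is exactly the paper's starting point, and your topological identification of the double cover of $\hat L$ as $S^1\times(\Sigma\#\Sigma)$ is also in the paper. After that, however, the proposal diverges in ways that do not work.

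First, you assert without argument that the hyperplane class restricts nontrivially to $H^2(L;\bZ_2)$ and that the circle bundle is the (twisted) Hopf bundle. Neither of these is automatic: a priori the Maslov number of $L$ could be $2n+2$ rather than $n+1$, and the restriction of the hyperplane class could be zero. The paper derives both facts from the module structure of $HF^*(L,L)$ over $QH^*(\bC\bP^n)$: quantum cup-product by the degree-$2$ hyperplane class is invertible, forcing $HF^*(L,L)$ to be $2$-periodic, which rules out minimal Maslov number $2n+2$ and forces the nontrivial restriction. Without this step you have not even determined the topology of $\hat L$, and you certainly have not established the Maslov number $n+1$ which the rest of the argument requires.

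Second, and more seriously, you propose to run an ``Ekholm--Smith style'' count of \emph{Maslov-two} discs with boundary on $P$ \emph{inside $\bC\bP^{n+1}$}, lines meeting the divisor once, handled equivariantly on a double cover. That is not the paper's argument, and it would require a genuinely new closed-manifold analogue of the whole machinery of \cite{EkholmSmith} (which is built around displaceable exact Lagrangians in $\bC^N$ and Floer continuation-strips with a displacing Hamiltonian). The point of Biran's construction in the paper's proof is precisely to escape the closed manifold: $\hat L$ lies in the \emph{ball} $B^{2n+2}\subset\bC^{n+1}$, is displaceable there, and has minimal Maslov number $n+1 = 4k$ — so the proof of Theorem~\ref{thm:4k-1} (Lemma~\ref{Lem:GaussInterpolate} through the circle-valued-filling argument) applies essentially verbatim to $\hat L \cong_{C^0} S^1\times S^n$. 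Your Maslov-$2$ discs in $\bC\bP^{n+1}$ are simply the wrong objects, and no equivariant lifting to the double cover is needed: one works with $\hat L$ itself.

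Finally, you do not address the mod-$8$ dichotomy. The Gauss-map step in Theorem~\ref{thm:4k-1} needs $\pi_{n}(U/O)$: this vanishes when $n\equiv -1\pmod 8$, but equals $\bZ/2$ when $n\equiv 3\pmod 8$, and in the latter case there is an additional argument needed (the relevant obstruction is lifted from a top cell of $\bR\bP^n\#\Sigma$ and is therefore $2$-divisible, hence zero). Your proposal does not separate these cases and would get stuck at exactly this point in half the dimensions covered by the theorem.
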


The constraints of Theorems \ref{Thm:One} and \ref{Thm:Two} arise from moduli spaces of holomorphic discs and from  Floer theory, and the underlying arguments can sometimes be implemented for exact Lagrangian submanifolds of Weinstein manifolds that are not necessarily cotangent bundles. For instance, \cite{AbSm} showed that in the plumbing $T^*S^n \,\#_{\mathrm{pl}}\, T^*S^n$ of two copies of $T^*S^n$, every exact Lagrangian submanifold of vanishing Maslov class is a homotopy sphere. This plumbing is symplectomorphic to the Milnor fibre $A^n_2 =   \left\{\sum_{i=1}^n x_i^2 + z^{3} = 1\right\} \subset (\bC^{n+1}, \omega_{\st})$ 
equipped with the restriction of the flat K\"ahler form from Euclidean space. 
In contrast with Theorem \ref{Thm:One},  the aforementioned cut-and-paste argument shows the following.

\begin{Proposition}
Every multiple of $\Sigma$ in $\Theta_n$ admits a Lagrangian embedding into 
$T^*\Sigma \,\#_{\mathrm{pl}}\, T^*S^n$. 
\end{Proposition}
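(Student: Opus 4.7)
My plan is to construct, by induction on $k\geq 0$, a Lagrangian submanifold $L_k$ of the plumbing $P := T^*\Sigma \,\#_{\mathrm{pl}}\, T^*S^n$ that is diffeomorphic to the $k$-fold oriented connect sum $k\Sigma \in \Theta_n$.  Because $\Theta_n$ is finite, as $k$ ranges over $0,1,2,\dots$ these classes exhaust the cyclic subgroup $\langle \Sigma \rangle \subset \Theta_n$, so every multiple of $\Sigma$ is realized as a Lagrangian in $P$.  For the base case I take $L_0 = S^n$, the zero section of the $T^*S^n$ factor; this Lagrangian represents $0 \cdot \Sigma \in \Theta_n$, and by construction of the plumbing it meets the other zero section $\Sigma \subset T^*\Sigma$ transversely at a single point.

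For the inductive step I apply Polterovich's Lagrangian surgery at a transverse intersection point of $L_k$ with the zero section $\Sigma$; the result $L_{k+1}$ is a Lagrangian submanifold of $P$ that is smoothly the oriented connect sum $L_k \,\#\, \Sigma$, hence diffeomorphic to $(k+1)\Sigma \in \Theta_n$.  To guarantee that such a transverse intersection point always exists, I use a homological argument.  In $H_n(P;\bZ) \cong \bZ\langle [\Sigma]\rangle \oplus \bZ\langle [S^n]\rangle$ one has $[\Sigma]\cdot[S^n] = 1$ by construction of the plumbing, while the self-intersections $[\Sigma]^2$ and $[S^n]^2$ vanish because $\chi(\Sigma) = \chi(S^n) = 0$ in the relevant odd-dimensional setting.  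Since Lagrangian surgery adds homology classes, an induction gives $[L_k] = k[\Sigma] + [S^n]$, and hence $[L_k] \cdot [\Sigma] = 1$ for every $k$.  After a generic small Hamiltonian perturbation restoring transversality, $L_k$ therefore meets $\Sigma$ in an odd (in particular, positive) number of transverse points, supplying the surgery point for the next step and closing the induction.

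The main technical difficulty I anticipate is keeping track of orientations.  Polterovich's surgery has two inequivalent forms differing by the sign of the Lagrangian handle, which produce the oriented connect sums $L_k \,\#\, \Sigma$ and $L_k \,\#\, (-\Sigma)$ respectively; these are genuinely different elements of the group $\Theta_n$.  To realize the class $(k+1)\Sigma$ rather than $(k-1)\Sigma$ at each step, one must choose the surgery variant consistently and verify that the chosen oriented smooth structure is preserved through the Hamiltonian perturbations used to restore transversality after each surgery.  This is a matter of careful bookkeeping, aided by the local model for Polterovich's handle, but presents no conceptual obstruction.
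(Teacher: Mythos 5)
Your argument takes a genuinely different route from the paper's. The paper proves the closely related Lemma in Section~3.3 in a single step: apply the $m$-th power of the Dehn twist along the core $S^n$ to the core $\Sigma$, obtaining an embedded copy of $\Sigma$ (still diffeomorphic to $\Sigma$, since a Dehn twist is a symplectomorphism) whose projection to the $S^n$-factor has degree $m$, then make the Lagrangian graphical over a disk in $S^n$ and perform the cut-and-reglue of Lemma~\ref{Lem:KraghTrick}, which changes the smooth type by $m[\Sigma]$ at once. You instead propose an induction where each step is a Polterovich surgery of $L_k$ with the core $\Sigma$ at a transverse intersection point, obtaining $L_{k+1}\cong L_k\#\Sigma$. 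Your homological argument for the existence of a transverse intersection point is correct (for $n$ odd, as you implicitly require).

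However, the orientation step that you describe as ``a matter of careful bookkeeping'' is in fact the crux, and I do not believe the mechanism you invoke works. The two resolutions of a Lagrangian surgery (perturbing the curve $\gamma$ to one or the other side of the origin in the local Lefschetz-fibration model) differ in their Maslov class and Hamiltonian isotopy type, not in their oriented diffeomorphism type; in the motivating example of the Whitney immersion, both $K_+$ and $K_-$ are diffeomorphic to $S^1\times S^{n-1}$. The oriented diffeomorphism type of the surgery $L_k\#(\pm\Sigma)$ is instead dictated by the local orientation data at the intersection point (whether the given orientations of $T_pL_k$ and $T_p\Sigma$, in the chart where they become $\R^n$ and $i\R^n$, are the standard ones or not), which is not a free choice. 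Since $[L_k]\cdot[\Sigma]=1$ only determines a signed count, you have no a priori control over the sign at a particular surgery point, and the induction could drift to $L_k\#(-\Sigma)=(k-1)\Sigma$ rather than $(k+1)\Sigma$. You would need to additionally argue that you can always realize an intersection point of the required sign (or switch to a different geometric input), which is not supplied. The paper's argument avoids this issue entirely because the Dehn twist preserves the smooth type of the Lagrangian exactly, and the subsequent cut-and-reglue changes it in a way computed once and for all in Lemma~\ref{Lem:KraghTrick}, where the relevant sign analysis (counting positively versus negatively oriented sheets of the graphical cover) is done explicitly.
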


This note contains several other results which are variations on this theme (and which are fairly straightforwad consequences of \cite{Abouzaid, EkholmSmith}, in combination with more classical Floer-theoretic arguments): embedding or non-embedding results for exotic spheres in certain plumbings or boundary connect sums of cotangent bundles,  constraints on contactomorphisms of unit cotangent bundles, and existence of exotic Lagrangian concordances of certain Legendrian spheres.  

To amplify the last of these, from the perspective of Legendrian surgery, Theorem \ref{Thm:One} shows that the re-parameterization of certain Legendrian spheres may change their Legendrian isotopy class, and that leads to a similar result for cotangent bundles. To state the result, we recall that there is a natural identification between the path components of the group of orientation-preserving diffeomorphisms $\pi_{0}(\mathrm{Diff}(S^n))$ and the group $\Theta_{n+1}$ of oriented homotopy spheres, via the clutching function construction.
\begin{Theorem}\label{t:nearbyparam}
Let $n\ge 6$ be even and let $\phi\colon S^{n}\to S^{n}$ be a diffeomorphism defining a homotopy sphere $\Sigma_{\phi} \in \Theta_{n+1} \backslash bP_{n+2}$.  Let $\iota\colon S^{n}\to T^{\ast}S^{n}$ denote the inclusion of the $0$-section. Then the maps $\iota$ and $\iota\circ\phi$ are not Hamiltonian isotopic. 
\end{Theorem}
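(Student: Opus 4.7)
The plan is to argue by contradiction and reduce to Theorem \ref{Thm:One} through a cut-and-paste construction. Suppose $\iota$ and $\iota \circ \phi$ are Hamiltonian isotopic, realised by some Hamiltonian isotopy $\psi_t$ of $T^*S^n$ with $\psi_0 = \id$ and $\psi_1 \circ \iota = \iota \circ \phi$. Since the zero section $S^n$ is compact, I would first truncate the generating Hamiltonian outside a compact neighbourhood of the zero section to arrange that $\psi_t$ is compactly supported, without disturbing the identity $\psi_1 \circ \iota = \iota \circ \phi$. The time-one map $\psi_1$ is then a compactly supported symplectomorphism of $T^*S^n$, Hamiltonian isotopic to the identity, whose restriction to the zero section realises the diffeomorphism $\phi$.

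Next, I would use $\psi_t$ to build a symplectomorphism $T^*S^{n+1} \cong T^*\Sigma_{\phi}$. Decompose $S^{n+1} = D^{n+1}_+ \cup_{\id} D^{n+1}_-$ and $\Sigma_{\phi} = D^{n+1}_+ \cup_{\phi} D^{n+1}_-$ via the clutching construction on the equatorial $S^n$. Symplectically, this presents $T^*S^{n+1}$ as the union of the disk cotangent bundles $T^*D^{n+1}_{\pm}$ glued along a collar $T^*(S^n \times (-\epsilon, \epsilon))$ of the equator by the identity symplectomorphism of $T^*S^n$; replacing this gluing by $\psi_1$ produces a Weinstein manifold $W$ whose Lagrangian skeleton is $D^{n+1}_+ \cup_{\phi} D^{n+1}_- = \Sigma_{\phi}$, because $\psi_1$ restricts to $\phi$ on $S^n$. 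The one-parameter family of gluings controlled by $\psi_t$ gives a symplectic isotopy between $T^*S^{n+1}$ and $W$ and, via a Moser-type argument, a symplectomorphism between them. On the other hand, $W$ is a Weinstein manifold that deformation retracts to its closed Lagrangian skeleton $\Sigma_{\phi}$, so by Weinstein neighbourhood uniqueness it is symplectomorphic to $T^*\Sigma_{\phi}$.

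Composing the two symplectomorphisms yields $T^*S^{n+1} \cong T^*\Sigma_{\phi}$. Since $n\ge 6$ is even, $n+1 \ge 7$ is odd, so Theorem \ref{Thm:One} applies and forces $[\Sigma_{\phi}] = \pm[S^{n+1}] = 0$ in $\Theta_{n+1}/bP_{n+2}$, contradicting the hypothesis $\Sigma_{\phi} \in \Theta_{n+1} \setminus bP_{n+2}$. I expect the main obstacle to be the cut-and-paste step in the second paragraph: producing a rigorous interpolation of the gluing symplectomorphisms controlled by $\psi_t$ (including controlling behaviour off the zero section), and verifying that the modified Weinstein structure is really that of the cotangent bundle $T^*\Sigma_{\phi}$ rather than merely a Weinstein manifold admitting $\Sigma_{\phi}$ as a closed exact Lagrangian. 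This is precisely the cut-and-paste technique that the introduction flags as one of the paper's key new ingredients, so the argument should slot into the general framework developed there.
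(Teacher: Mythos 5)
Your strategy shares the paper's endgame---reduce to Theorem~\ref{Thm:One} by converting a Hamiltonian isotopy between $\iota$ and $\iota\circ\phi$ into a symplectomorphism between $T^*S^{n+1}$ and $T^*\Sigma_\phi$---but it reaches that symplectomorphism by a genuinely different route. The paper's proof lifts the Hamiltonian isotopy to a Legendrian isotopy of the zero section in the $1$-jet space $J(S^n)=T^*S^n\times\R$, uses fibre- and $\R$-scaling to compress the whole isotopy into an arbitrarily small Weinstein neighbourhood of $\Lambda=S^n$, transplants it to the standard Legendrian unknot in the contact boundary of a ball, and finally quotes Corollary~\ref{c:paramisotopy}, i.e.~the fact that Weinstein handle attachment depends only on the parametrised Legendrian isotopy class of the attaching sphere. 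You instead stay entirely in the symplectic category, splitting $T^*S^{n+1}$ over the two hemispheres and regluing along the equatorial collar by $\psi_1\times\id$. The point you rightly flag as the obstacle---showing the reglued manifold $W$ is still symplectomorphic to $T^*S^{n+1}$---is exactly what the paper's handle-attachment formalism gives for free, but your route can also be closed directly and without a Moser argument: cutting off the generating Hamiltonian $H_t$ of $\psi_t$ in the latitude coordinate $s$ yields a Hamiltonian $\chi(s)H_t$ on $T^*D^{n+1}_-$ whose time-one flow preserves $s$, agrees with $\psi_1\times\id$ near $\partial D^{n+1}_-$, and is the identity further in; this extension of $\psi_1\times\id$ over $T^*D^{n+1}_-$ exhibits $W\cong T^*S^{n+1}$ explicitly. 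One simplification of your sketch: you do not actually need the identification $W\cong T^*\Sigma_\phi$ that you are most worried about. Once $W\cong T^*S^{n+1}$, the zero section of $W$ already provides a Lagrangian embedding $\Sigma_\phi\hookrightarrow T^*S^{n+1}$, and the Lagrangian-embedding form of the obstruction (Abouzaid's theorem for $n+1\equiv 1\pmod 4$, Theorem~\ref{thm:4k-1} for $n+1\equiv 3\pmod 4$) finishes the argument without ever producing a symplectomorphism $T^*\Sigma_\phi\cong T^*S^{n+1}$.
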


This result was independently (and originally) obtained by Evans and Dmitroglou Rizell, and appears as Corollary 5.1 in the first arXiv version of \cite{DRizellEvans}.
We point out that $\iota$ and $\iota\circ\phi$ in Theorem \ref{t:nearbyparam} are smoothly isotopic by \cite{Haefliger}.
There is a folklore strengthened version of the nearby Lagrangian conjecture, which states that for a closed manifold $M$,  the space of embedded closed exact Lagrangians in $T^*M$ is contractible. This is equivalent to the statement that the map from the space of diffeomorphisms of $M$ to the space of parametrized Lagrangian embeddings $M \to T^*M$ is a homotopy equivalence. Theorem \ref{t:nearbyparam} is consistent with this stronger conjecture.

\section{Nearby Lagrangian homotopy $(4k-1)$-spheres}
The main result in \cite{Abouzaid} says that, for a homotopy sphere $\Sigma$ of dimension $4k+1$, if $T^{\ast}\Sigma$ is symplectomorphic to $T^{\ast} S^{4k+1}$ then $[\Sigma]\in bP_{4k+2}$ (since $bP_{n+1} \leq \Theta_n$ is a subgroup the conclusion is independent of the choice of orientation on $\Sigma$). In this section we describe an extension of the results in \cite{EkholmSmith} which, together with results on Lagrangian regular homotopy classes of exact Lagrangian embeddings in cotangent bundles from \cite{AbKr}, gives the corresponding result in dimensions $4k-1$. A more precise statement of the main result of this section is as follows.

\begin{Theorem}\label{thm:4k-1}
Let $\Sigma$ be a homotopy $(4k-1)$-sphere. If $\Sigma$ admits a Lagrangian embedding into $T^{\ast}S^{4k-1}$ with its standard symplectic form then $[\Sigma]\in bP_{4k}$. In particular,  if $[\Sigma]\notin bP_{4k}$ then the cotangent bundles $T^{\ast}\Sigma$ and $T^{\ast}S^{4k-1}$ are not symplectomorphic.
\end{Theorem}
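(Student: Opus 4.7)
The starting observation is that any Lagrangian embedding $\iota\colon\Sigma\hookrightarrow T^{\ast}S^{4k-1}$ is automatically exact, since $H^{1}(\Sigma;\R)=0$ for the homotopy sphere $\Sigma$. Taking the product with the identity on $T^{\ast}S^{1}$ yields an exact Lagrangian embedding
\[
\id_{S^{1}}\times\iota\colon S^{1}\times\Sigma \;\hookrightarrow\; T^{\ast}(S^{1}\times S^{4k-1})
\]
of a closed $4k$-manifold into an $8k$-dimensional cotangent bundle; this is the probe to which I plan to apply Floer-theoretic input in the spirit of \cite{EkholmSmith}.

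The heart of the argument is an adaptation of \cite{EkholmSmith} so that it accepts a probe of product form $S^{1}\times\Sigma$ in place of the connect-sum probe $(S^{1}\times S^{n})\#\Sigma'$ used there. Concretely, I would rerun the analysis of moduli spaces of holomorphic discs (or strips) with boundary on the zero section and on $S^{1}\times\Sigma$ inside $T^{\ast}(S^{1}\times S^{4k-1})$; a Pontrjagin--Thom / index argument then manufactures a stably parallelizable cobordism carrying the class of $\Sigma$. The $S^{1}$-translation symmetry on the ambient cotangent bundle, which is preserved by the product Lagrangian, is what should allow the output bordism to decouple from the $S^{1}$-factor and be viewed as data attached directly to the $(4k-1)$-dimensional sphere $\Sigma$, rather than merely to $S^{1}\times\Sigma$.

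To upgrade the resulting framed cobordism class to an honest parallelizable filling of $\Sigma$---equivalently, the assertion $[\Sigma]\in bP_{4k}$---one must control the Lagrangian regular homotopy class of $\iota$, since this is the datum that trivializes the relevant normal bundle and hence pins down the stable framing appearing in the Pontrjagin--Thom step. This is precisely what the calculations of \cite{AbKr} supply: in the dimensions in question they identify the regular homotopy class of any exact Lagrangian embedding into $T^{\ast}S^{4k-1}$, eliminating the stable framing ambiguity in the cobordism produced above.

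The main obstacle I anticipate is the Floer-theoretic adaptation in the middle step: checking that the moduli spaces associated to the product probe $S^{1}\times\Sigma$ compactify, are cut out transversely, and carry coherent orientations so that the \cite{EkholmSmith} Pontrjagin--Thom scheme goes through, and, crucially, that its output genuinely descends to a null-cobordism of $\Sigma$ itself rather than only of $S^{1}\times\Sigma$. Once that is in place, combining it with the regular-homotopy-class computation from \cite{AbKr} should yield $[\Sigma]\in bP_{4k}$, and the final clause of the theorem is immediate by applying the conclusion to the exact Lagrangian given by the image of the zero section of $T^{\ast}\Sigma$ under a symplectomorphism $T^{\ast}\Sigma\cong T^{\ast}S^{4k-1}$.
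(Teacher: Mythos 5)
Your high-level strategy is correct: take the product with $S^{1}$, run an \cite{EkholmSmith}-type moduli space argument, and invoke \cite{AbKr} to control the Lagrangian regular homotopy class. But two of the key mechanisms are either missing or described incorrectly.

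First, you cannot run the moduli space analysis ``inside $T^{\ast}(S^{1}\times S^{4k-1})$ with boundary on the zero section and on $S^{1}\times\Sigma$.'' The \cite{EkholmSmith} construction depends essentially on the Lagrangian being \emph{displaceable} by a Hamiltonian isotopy: the $1$-parameter family of perturbed Floer equations degenerates to the unperturbed holomorphic disk equation at $r=0$ and has no solutions at large $r$, and it is the interplay between these two ends that produces the bounding manifold. An exact Lagrangian homologous to the zero section is not displaceable in the cotangent bundle, so this scheme simply does not start there. The paper instead composes the fiber-scaled embedding of $S^{1}\times\Sigma$ with the symplectic embedding $W\colon\mathcal{U}\to\C^{4k}$ obtained by micro-extension of the Maslov-$2k$ Lagrange surgery $w\colon S^{1}\times S^{2k-1}\to\C^{2k}$ on the Whitney sphere, so that the resulting Lagrangian $L=W(\iota(S^{1}\times\Sigma))\subset\C^{4k}$ \emph{is} displaceable and the displacement argument can be applied. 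This transplantation into $\C^{4k}$ is not a convenience but a necessity.

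Second, the proposed $S^{1}$-translation-symmetry ``decoupling'' is not what produces a filling of $\Sigma$ from a filling of $S^{1}\times\Sigma$, and it cannot be: once you pass into $\C^{4k}$ via $W$ and choose a generic displacing Hamiltonian and generic almost complex structure for transversality, there is no residual $S^{1}$-symmetry on the bounding manifold $\BB$. What the paper actually does is construct a map $b\colon\BB\to S^{1}$ extending the projection $S^{1}\times\Sigma\to S^{1}$ on the boundary (Lemma~\ref{lma:goodfilling}); this requires matching up an evaluation-at-$1$ map on one piece of $\BB$ with the evaluation map coming from the fibered-product description of $\NN$ on the other piece, and checking the two are homotopic on the overlap. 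Slicing $\BB$ at a regular value of $b$ then gives an almost parallelizable, hence parallelizable, filling of $\Sigma$. Relatedly, \cite{AbKr} enters earlier than you suggest: it is used in Lemma~\ref{Lem:GaussInterpolate}, in the case $4k-1\equiv 3 \pmod 8$ where $\pi_{4k-1}(U/O)=\Z_{2}$, to show the stable Lagrangian Gauss map of the embedding is homotopic to that of the zero section; this feeds the index-bundle argument that gives stable parallelizability of $\BB$, rather than fixing a ``framing ambiguity'' at the Pontrjagin--Thom stage. Without either the transplantation to $\C^{4k}$ or the $S^{1}$-valued map on $\BB$, the proposal does not close.
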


\subsection{Lagrange surgery on the Whitney sphere}
Let $\pi\colon \C^{2k} \to \C$ be the model Lefschetz fibration 
\[
\pi(z_1,\ldots, z_{2k})= \sum_{j=1}^{2k} z_j^2.
\]
Let $\gamma$ be a smooth embedded closed curve in $\C$ passing through the origin.  The union of the vanishing cycles of $\pi$ along $\gamma$ defines an immersed Lagrangian sphere $K\subset\C^{2k}$ with a single transverse double point, which is one model for the Whitney immersion.  For instance, if $B_{0}(\delta)\subset\C$ denotes the disk of radius $\delta$ centered at the origin and if $\gamma \cap B_0(\delta) = (-\delta, \delta) \subset \R$, then the vanishing cycles $V_t = \sqrt{t}S^{2k-1} \subset \pi^{-1}(t)$ along $\gamma$ are locally given by
\[
\bigcup_{t \in [0,\delta)} V_t \ = \ \R^{2k}; \quad \bigcup_{t\in (-\delta,0]} V_t = i\R^{2k}
\]
which meet transversely. There are two Lagrange surgeries $K_+$ and $K_-$ of $K$ which are given by perturbing $\gamma$ to an embedded curve in $\C-\{0\}$ which respectively does or  does not enclose the origin. The surgeries have Maslov number $\mu(K_-)=2$ and $\mu(K_+)=2k$, see e.g.~\cite[Lemma 2.5]{EkholmSmith}. The Maslov $2k$ surgery $K_+$ is Lagrangian isotopic to the Lagrangian $S^{1}\times S^{2k-1}$ in the model Lefschetz fibration,
\[
K_+ \ = \ \bigcup_{t\in S^1} \sqrt{t} S^{2k-1},
\]
which is also the mapping torus of the antipodal map on $S^{2k-1}$. In what follows we will denote this Maslov $2k$ embedding $w\colon S^{1}\times S^{2k-1}\to\C^{2k}$. By micro-extension of Lagrangian embeddings we find that $w$ extends to a symplectic embedding 
\begin{equation}\label{eq:microext}
W\colon \mathcal{U}\to \C^{2k},
\end{equation}
where $\mathcal{U}\subset T^{\ast} (S^{1}\times S^{2k-1})$ is a neighborhood of the $0$-section. 
 
Let $\Sigma$ be a $(4k-1)$-dimensional homotopy sphere and assume that $\Sigma$ admits a (necessarily  exact) Lagrangian embedding $\iota'\colon\Sigma\to T^{\ast} S^{4k-1}$. Multiplying by $S^{1}$ we find that  $S^{1}\times\Sigma$ admits an exact Lagrangian embedding $\iota\colon S^{1}\times\Sigma\to T^{\ast} (S^{1}\times S^{4k-1})$. After fiber-scaling we may assume that the image of $\iota$ lies inside $\mathcal{U}$, see \eqref{eq:microext}. Consider the composite Lagrangian embedding 
\begin{equation}\label{eq:composition}
f:=W\circ\iota\colon S^{1}\times\Sigma\to \C^{4k}
\end{equation}
with image $L=f(S^1\times \Sigma)$. Any such Lagrangian embedding determines a stable Lagrangian Gauss map
\begin{equation} \label{Eqn:Gauss}
G_f\colon L \to U(4k)/O(4k) \to U/O,
\end{equation}
given at a point $q\in L$ by its tangent space $T_qL \subset \C^{4k}$ considered as a stable Lagrangian subspace in $U/O$.

Consider the space $\mathcal{X}$ of maps of the 2-disk $D$ into $\C^{4k}$ with boundary in $L$ and for $u\in\mathcal{X}$ consider the $\bar{\pa}$-operator acting on vector fields $v\colon D\to\C^{4k}$ along $u$ that are tangent to $L$ along the boundary, i.e.~for $z\in\partial D$, $v(z)\in T_{u(z)}L$. Associated to this data is an index bundle over $\mathcal{X}$ that controls the tangent bundle of the space of (perturbed) holomorphic disks with boundary on $L$. As in \cite{EkholmSmith} we claim that in the present case this index bundle is completely determined by the corresponding index bundle for $K_+$.  More precisely, we have the following result.

\begin{Lemma} \label{Lem:GaussInterpolate}
There is a one-parameter family of $C^0$-homeomorphisms $\phi_t\colon \C^{4k} \to \C^{4k}$ and a 1-parameter family of continuous maps $\psi_t\colon L\approx_{C^{0}} S^1 \times S^{4k-1}  \to U/O$, with the properties:
\begin{enumerate}
\item $\phi_0$ is the identity;
\item $\phi_1(w(S^{1}\times S^{4k-1})) = f(S^{1}\times\Sigma)$;
\item $\psi_0=G_w$ and $\psi_1 = G_{f} \circ \phi_1$.
\end{enumerate}
\end{Lemma}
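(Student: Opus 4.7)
The plan is to combine the topological classification of homotopy spheres with the regular-homotopy analysis of exact Lagrangian embeddings from \cite{AbKr}, following the pattern in \cite{EkholmSmith}. The two ingredients are largely independent: first one produces an ambient $C^0$-isotopy of $\C^{4k}$ identifying $w(S^1\times S^{4k-1})$ with $L=f(S^1\times\Sigma)$, and second one interpolates the stable Lagrangian Gauss maps compatibly with this identification.

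For the isotopy $\phi_t$, I would first observe that since $\dim\Sigma = 4k-1 \ge 5$, the topological Poincar\'e conjecture gives a homeomorphism $\bar h\colon S^{4k-1}\to\Sigma$, and so $h := \id_{S^1}\times\bar h$ identifies $S^1\times S^{4k-1}$ with $S^1\times\Sigma$ topologically. Then $w$ and $f\circ h$ are two locally flat topological embeddings of $S^1\times S^{4k-1}$ into $\C^{4k}$, with images $w(S^1\times S^{4k-1})$ and $L$. I would construct $\phi_t$ inside $T^*(S^1\times S^{4k-1})$ by comparing the zero section $s_0$ with $\iota(S^1\times\Sigma)$: by the nearby Lagrangian homotopy-equivalence results (\cite{Abouzaid:htpy, FSS, Nadler, Kragh}), the projection $\iota(S^1\times\Sigma)\to S^1\times S^{4k-1}$ is a homotopy equivalence, and in high codimension standard topological embedding theory (the topological $s$-cobordism theorem together with uniqueness of topological tubular neighborhoods) promotes this to an ambient $C^0$-isotopy of $T^*(S^1\times S^{4k-1})$ carrying the zero section onto $\iota\circ h$. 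Transporting via $W$ and extending by the identity outside $W(\mathcal{U})$ then yields $\phi_t$.

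For the Gauss-map homotopy $\psi_t$, I would use that both $G_w$ and $G_f$ factor, via $dW$ applied to Lagrangian frames, through the stable Lagrangian Gauss maps of $s_0$ and of $\iota$ inside $T^*(S^1\times S^{4k-1})$. After pulling back to $S^1\times S^{4k-1}$ via $h$, the remaining comparison is exactly the regular-homotopy statement analysed in \cite{AbKr}: in our dimension range their results imply that every exact Lagrangian embedding of a homotopy $S^1\times S^{4k-1}$ lies in the stable Lagrangian regular homotopy class of the zero section. Pushing the resulting homotopy of classifying maps forward via $dW$ and reparameterising by $\phi_1$ produces $\psi_t$ with the claimed endpoints.

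The main obstacle is the second step: one has to check carefully that the regular-homotopy results of \cite{AbKr} apply to the product embedding $\iota = \id_{S^1}\times\iota'$ at the level of the full stable Lagrangian Gauss map (and not merely some coarser invariant), which amounts to verifying that the relevant obstructions vanish for our base manifold $S^1\times S^{4k-1}$. Once this rigidity is in hand, the ambient $C^0$-isotopy of the first step is a comparatively routine piece of high-codimension topological embedding theory.
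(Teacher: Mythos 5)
Your overall plan --- ambient $C^0$-isotopy plus a homotopy of stable Lagrangian Gauss maps, with the Gauss-map step reduced to the regular-homotopy constraints of \cite{AbKr} --- is the right one, but you run both steps directly at the level of the product embedding $\iota\colon S^1\times\Sigma\to T^*(S^1\times S^{4k-1})$, and that is where the approach diverges from (and becomes harder than) the paper's. The ambient manifold $T^*(S^1\times S^{4k-1})$ is not simply connected, so the ``standard topological embedding theory'' you invoke for the $C^0$-isotopy requires a Whitehead-torsion verification you do not supply, and you explicitly flag (but do not resolve) whether the results of \cite{AbKr} apply over the non-simply-connected base $S^1\times S^{4k-1}$ rather than over a sphere.

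The paper sidesteps both concerns by exploiting the product structure $\iota = \id_{S^1}\times\iota'$ and carrying out the entire comparison first for $\iota'\colon\Sigma\to T^*S^{4k-1}$, where everything is simply connected. There it (i) works in the {\sc pl} category rather than the topological one: $\Sigma$ is {\sc pl}-homeomorphic to $S^{4k-1}$ because it carries a Morse function with two critical points, and Hudson's ambient {\sc pl}-isotopy theorem applies in codimension $4k-1>3$ once Seidel's degree-$\pm1$ result is used to show the relative homotopy groups of $(\text{mapping cone}(\iota'),T^*S^{4k-1})$ vanish; and (ii) reduces the Gauss-map comparison to a concrete calculation in $\pi_{4k-1}(U/O)$ after trivializing $T(T^*S^{4k-1})$ complex-linearly, splitting into $4k-1\equiv -1\pmod 8$ (where the group is zero and nothing further is needed) and $4k-1\equiv 3\pmod 8$ (where the group is $\Z_2$ and the obstruction is killed by \cite{AbKr}). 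Only afterwards does it product with $\sigma\times\cdot$ and $\tau\times\cdot$ on the $S^1$-factor and push forward by $W$. If you reorganize your argument in the same way --- isotopy and Gauss-map homotopy for $\iota'$ first, then product --- both of your flagged concerns disappear, and the precise piece of \cite{AbKr} that is actually needed becomes visible.
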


\begin{proof}
Any homotopy $n$-sphere $\Sigma$, $n>4$, admits a Morse function with exactly two critical points \cite{KM}, and hence admits the structure of a {\sc pl}-manifold {\sc pl}-homeomorphic to the standard sphere $S^n$. Thus, $\Sigma\approx_{\text{\sc pl}} S^{4k-1}$ and we will identify the two as {\sc pl}-manifolds.

The composition of $\iota'$ and the projection to the $0$-section has degree $\pm 1$, by \cite{Seidel:kronecker}, so the homotopy groups of the mapping cone on $\iota'$ relative $T^{\ast} S^{4k-1}$ all vanish. 
Moreover, the codimension of the {\sc pl}-embedding $\iota' \colon \Sigma\to T^{\ast} S^{4k-1}$ is $4k-1>3$, and so it follows from \cite{Hudson} that $\iota'$ is ambient {\sc pl}-isotopic to the inclusion of the $0$-section. Denote the ambient {\sc pl}-isotopy $\theta_{t}$, $t\in[0,1]$. 

Note next that the tangent bundle of $T^{\ast} S^{4k-1}$ equipped with a complex structure compatible with the standard symplectic form is trivial as a complex vector bundle since $\pi_{4k-2}(U(4k-1))=0$. Thus the immersion $\iota$ and the inclusion of the $0$-section both determine stable Lagrangian Gauss maps $G_{\iota}$ and $G_{0}$ into $U/O$. We next show that there is a homotopy of Gauss maps covering the {\sc pl}-isotopy between $\iota$ and the $0$-section, or more simply stated that $G_{\iota}$ and $G_{0}$ are homotopic. 

Consider first the case when $4k-1=8j-1$. By Bott-periodicity $\pi_{8j-1}(U/O)=0$ and the required homotopy exists. In the case $4k-1=8j+3$, Bott-periodicity gives $\pi_{8j+3}(U/O)=\Z_{2}$ but \cite[Equation (2.16) \& Table 1, column labeled $k\;\mathrm{mod}\,8=2$]{AbKr} implies that the homotopy class of the Gauss map of any exact Lagrangian embedding equals $0\in\Z_{2}$. We conclude that $G_{\iota}$ and $G_{0}$ are again homotopic  in this case. Pick a homotopy of Gauss maps $\rho_{t}$, $t\in[0,1]$, covering $\theta_{t}$. 
  
Let $\sigma\colon S^{1}\to T^{\ast} S^{1}$ be the inclusion of the $0$-section and $\tau$ denote its tangent line in $T(T^{\ast} S^{1})$. The lemma is proved by taking the maps 
\begin{align*}
&\sigma\times\theta_{t}\colon S^{1}\times S^{4k-1}\to T^{\ast}S^{1}\times T^{\ast}S^{4k-1}=T^{\ast}(S^{1}\times S^{4k-1}),\\
&\tau\times \rho_{t}\colon S^{1}\times S^{4k-1}\to U/O,
\end{align*}
and then composing with $W\colon \mathcal{U} \to\C^{4k}$, i.e., $\phi_{t}=W(\sigma\times\theta_{t})$ and $\psi_{t}=dW(\tau\times\rho_{t})$.
\end{proof}

\subsection{Bounding manifolds from Hamiltonian displacements}
In this section we give a brief description of the main construction in \cite{EkholmSmith}.
With notation as in \eqref{eq:composition}, we let $L=f(S^{1}\times\Sigma)\subset \C^{4k}$. It follows from Lemma \ref{Lem:GaussInterpolate} that the minimal Maslov number of $L$ is $4k$. By approximation, see \cite[Lemma 4.1]{EkholmSmith}, we may furthermore assume that $L$ is real analytic.

Let $H=H_{t}\colon \C^{4k}\to \C^{4k}$ be a time dependent Hamiltonian function so that the time 1 flow of its Hamiltonian vector field $X_{H}$ displaces $L$ from itself. As in \cite[Section 3.1, Equation (3.3)]{EkholmSmith} we define a 1-parameter family of 1-forms $\gamma_{r}$, $r\in[0,\infty)$ on the 2-disk $D$ which vanishes in a neighborhood of the boundary and which equals $0$ for $r=0$. We then consider the Floer equation for maps $u\colon (D,\partial D)\to (\C^{4k},L)$,
\begin{equation}\label{eq:Floer}
(du + \gamma_{r}\otimes X_{H})^{0,1}=0,
\end{equation} 
where the complex anti-linear part is taken with respect to some almost complex structure $J$ on $\C^{4k}$ and the standard complex structure on $D$, and where $X_{H}=X_{H}(u(z),t(z))$ for a function $t\colon D\to[0,1]$ that is the pull-back of $t\in [0,1]$ under a conformal equivalence $D\backslash \{\pm 1\} \to\R\times[0,1]$. 

Let $\beta\in H_{1}(L)$ be a generator such that the Maslov index of $\beta$ equals $+4k$.
We write $\FF(j\beta)$, $j\in\Z$ for the space of solutions $u$ of \eqref{eq:Floer} such that $u(\partial D)$ represents the class $j\beta\in H_{1}(L)$, and $\FF^{r}(j\beta)$ for the corresponding space for a fixed parameter $r\in[0,\infty)$. The formal dimensions of these spaces are
\[ 
\dim(\FF(j\beta))=\dim(\FF^{r}(j\beta))+1= 4k(j+1)+1,
\] 
and \cite[Section 4]{EkholmSmith} shows that the solution spaces are transversely cut out $C^{1}$-manifolds for generic data.

For $r=0$, the perturbation term $X_{H}\otimes\gamma_{r}=0$ and \eqref{eq:Floer} reduces to the standard Cauchy-Riemann equation for holomorphic disks in $\C^{4k}$ with boundary on $L$. Since $X_{H}$ is displacing there exists for each $j$ an $r_0>0$ such that \eqref{eq:Floer} does not have any solutions in $\FF^{r}(j\beta)$ for $r>r_0$. In general, we write $\MM(j\beta)$ for the moduli space of unperturbed holomorphic disks in homology class $j\beta$:  
\[ 
\MM(j\beta)=\FF^{0}(j\beta)/G,
\]     
where $G$ is the group of conformal transformations of the disk. These spaces have formal dimensions
\[ 
\MM(j\beta)=4k(j+1)-3.
\] 
In fact, $\MM(j\beta)$ is empty for $j<0$ since the area of any holomorphic disk is non-negative,
whilst \cite[Section 4]{EkholmSmith} uses a gauge fixing procedure to show that for generic almost complex structure, $\MM(\beta)$ is a transversely cut out $(8k-3)$-dimensional $C^{1}$-manifold. 

We next discuss compactifications of these solution spaces. Consider first $\MM(\beta)$. By Gromov compactness this space is compact up to bubbling. However, since $\MM(0\beta)$ consists only of constant maps and $\MM(j\beta)=\varnothing$ for $j<0$, there can be no bubbles, and $\MM(\beta)$ is compact. The solution space $\FF(0\beta)$ has dimension $4k+1$. Its boundary is $\FF^{0}(0\beta)$. By exactness $\FF^{0}(0\beta)=L$ is the space of constant maps to $L$, and it is straightforward to check that this solution space is transversely cut out. Since $\FF^{r}(0\beta)=\varnothing$ for all sufficiently large $r$ it follows that $\FF(0\beta)$ is non-compact and its Gromov-Floer boundary is the space of broken disks with one component in $\FF(-\beta)$ and one in $\MM(\beta)$. Other configurations are ruled out by the  transversality property mentioned above: $\FF(j\beta)=\varnothing$ for $j<-1$. Thus the Gromov-Floer boundary of $\FF(0\beta)$ can be described as a fibered product. More precisely, if $\MM^{\ast}(\beta)$ denotes the moduli space of holomorphic disks with boundary in class $\beta$ and one marked point on the boundary and if $\FF^{\ast}(-\beta)$ denotes the space of disks in $\FF(-\beta)$ with one marked point on the boundary then there are evaluation maps 
\begin{align} \label{eq:evaluation1}
&\ev\colon \MM^{\ast}(\beta)\to L, \notag \\
&\ev\colon \FF^{\ast}(-\beta)\to L, 
\end{align} 
and the Gromov-Floer boundary is
\[ 
\MM^{\ast}(\beta)\times_{L} \FF^{\ast}(-\beta).
\]
 \cite[Section 5]{EkholmSmith} establishes transversality for this fibered product: for generic data 
\[ 
\NN=\MM^{\ast}(\beta)\times_{L} \FF^{\ast}(-\beta)
\]
is a $4k$-dimensional closed $C^{1}$-manifold. 

In \cite[Section 6]{EkholmSmith} a Floer gluing map 
\[ 
\Phi\colon \NN\times [\rho_0,\infty) \to \FF(0\beta)
\]
is constructed and proved to parameterize a neighborhood of the Gromov-Floer boundary (corresponding to $\NN\times\{\infty\}$). We give a brief description of the construction of this map. The starting point is a gauge fixing procedure for maps $v$ in the $\MM^{\ast}(\beta)$-component of elements in the fibered product $\NN$, see \cite[Section 5.2]{EkholmSmith}, which parameterizes the maps by the upper half-plane with the marked point at $\infty$ and with $\pm 1$ mapping to small spheres around the image of the marked point. This parameterization is denoted $v^{\mathrm{st}}$.   

Let $\XX(0\beta)$ denote the configuration space of all smooth maps from the disk into $\C^{4k}$ with boundary in $L$ and in the trivial relative homotopy class.  
If $(u,\zeta)\in \FF^{\ast}(-\beta)=\FF(-\beta)\times\partial D$ and $v\in\MM^{\ast}(\beta)$ with $u(\zeta)=v(1)$,  and if $\rho>0$ is large, then the $\rho$-pregluing $\Pre_{\rho}((u,\zeta),v)$ of $(u,\zeta)$ and $v$ is the element of  $\XX(0\beta)$ defined as follows. It agrees with $u$ outside a half-disk neighborhood of $\zeta$ of radius $2e^{-\rho}$, it agrees with $v^{\mathrm{st}}(e^{2\rho} z)$ for $z$ in the radius $e^{-\rho}$ half-disk neighborhood, and it interpolates between the two maps on the half annular region between the half circles of radii $e^{-\rho}$ and $2e^{-\rho}$, where both maps are at distance $\mathcal{O}(e^{-\rho})$ from the point $u(\zeta)=v(1)$. Applying parameterized Newton iteration to $\Pre_{\rho}$ we obtain solutions $\Phi_{\rho}((u,\zeta),v)$ to the Floer equation. In fact there is a $\rho_0>0$ such that the gluing map $\Phi$ gives a proper $C^{1}$-diffeomorphism from $\Pre([\rho_0,\infty)\times\NN)\subset\XX(0\beta)$ to a neighborhood of the Gromov-Floer boundary of $\FF(0\beta)$. Furthermore, the $C^{1}$-distance between $\Pre([\rho_0,\infty)\times\NN)$ and its image under the gluing map $\Phi$ goes to $0$ as $\rho_0\to\infty$.

\begin{Remark}
The Sobolev spaces used to define the configuration space as a Banach manifold (e.g.~to carry out the Newton iteration)  are not the ones naturally associated to maps from the unit disk. Instead we use a domain which consists of two disks joined by a long strip that grows with the gluing parameter, see \cite[Sections 3.6 and 6.3]{EkholmSmith} for details.
\end{Remark}

Removing a neighborhood of the Gromov-Floer boundary of $\FF(0\beta)$, we find that
\[ 
\overline{\FF}(0\beta)=\FF(0\beta)-\Phi(\NN\times (\rho_0,\infty))
\] 
is a smooth compact submanifold of $\FF(0\beta)$ with boundary 
\[  
\partial\overline{\FF}(0\beta)  \approx_{C^{1}}  
L \cup \NN,
\]
where the diffeomorphism on $L$ is given by inclusion of constant maps and the diffeomorphism on $\NN$ is given by the gluing map $\Phi$, which is arbitrarily close to the pregluing map provided $\rho_0$ is sufficiently large.

Consider the section $s$ of the bundle $\FF^{\ast}(0\beta)\to\FF(0\beta)$ over $\Phi(\NN\times\{\rho_0\})\subset\overline{\FF}(0\beta)$ which takes the solution that corresponds to $\Pre((u,\zeta),v)$ to the point on the boundary $\partial D$ which lies in the middle of the interpolation region in the positive direction from $\zeta$. Evaluation at this point gives a map 
\begin{align} \label{eq:evaluation2}
\ev_s\colon \NN \to L.
\end{align}
Since the gluing map $\Phi$ is close to $\Pre$ which maps the whole interpolation region into a small ball around $u(\zeta)$ it follows that $\ev_s$ is homotopic to the canonical map $\widetilde{\ev}_{s}\colon\NN\to L$ given by the composition 
\begin{equation}\label{fibermap}
\begin{CD}
\NN @>>> \FF^{\ast}(-\beta) @>{\ev_{\zeta}}>> L,
\end{CD} 
\end{equation}
i.e.~the map induced from the fibered product.

Consider the connected components $\mathcal{C}_{j}$, $j=1,\dots, m$ of the solution space $\FF(-\beta)$ and note that $\mathcal{C}_{j}$ is a closed connected 1-manifold, i.e.~a circle. The restriction $\mathcal{C}_{j}^{\ast}$ of the bundle $\FF^{\ast}(-\beta)$ to $\mathcal{C}_{j}$ is thus a torus. Since $H_1(L)\cong \Z$ and since the fiber of $\mathcal{C}_{j}^{\ast}$ maps to a generator of $H_1(L)$ there is a unique way of filling $\mathcal{C}_{j}^{\ast}$ to a solid torus $\mathcal{D}_{j}$ in such a way that the map $\ev_\zeta$ in~\eqref{fibermap} extends to a map $e_j\colon\mathcal{D}_{j}\to L$. We write $\DD=\bigcup_{j=1}^{m}\mathcal{D}_{j}$ for the union of these solid tori and $e\colon \DD\to L$ for the map which agrees with $e_j$ on $\DD_{j}$. Then $\partial\DD=\FF^{\ast}(-\beta)$ and we define, using an extension $e\colon\DD\to L$ in general position with respect to $\ev\colon\MM^{\ast}(\beta)\to L$,
\[ 
\TT =  \MM^{\ast}(\beta)\times_{L} \DD.
\]
Then $\TT$ is a smooth compact manifold with boundary $\partial \TT=\NN$ and 
\[ 
\BB = \overline{\FF}(0\beta) \cup_{\NN} \TT
\]
is a smooth compact manifold with boundary $\partial\BB=L$. Furthermore, the fiber product map $\widetilde{\ev}_{s}\colon \NN\to L$ extends to $\TT$.

\subsection{Stable trivialization and first homology of the bounding manifold}
In \cite[Section 7]{EkholmSmith} it is shown that the tangent bundle $T\BB$ of the manifold $\BB$ is stably trivializable, and thus the connected component with boundary $L$ is parallelizable. The proof of this fact uses the index bundle over the space of smooth disks in $\C^{4k}$ with boundary in $L$. Using Lemma \ref{Lem:GaussInterpolate} the study of this index bundle reduces to the study of the corresponding index bundle for the surgery on the Whitney sphere and that is then carried out by solving the unperturbed linearized $\bar{\partial}$-equation for disks with boundaries in a suitable skeleton of the loop space of $L \approx_{C^0} \textrm{image}(w)$.

\begin{Lemma}\label{lma:goodfilling}
  The projection to the first factor $L\to S^1$ extends to a map $L \subset \BB \to S^1$.
\end{Lemma}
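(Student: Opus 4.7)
The plan is to construct the extension piecewise on the two pieces $\overline{\FF}(0\beta)$ and $\TT$ of $\BB$, and then verify that the two local extensions glue along their common overlap $\NN$. On the $\TT$-side there is a tautological extension: $\TT=\MM^{\ast}(\beta)\times_L \DD$ projects to $\DD$, which maps to $L$ via $e$, so the composite
\[
F_\TT\colon \TT\longrightarrow \DD\xrightarrow{\,e\,} L\xrightarrow{\,p\,} S^1
\]
extends $p$. Its restriction to $\NN=\partial\TT$ is $p\circ\widetilde{\ev}_s$, i.e.\ $(v,(u',\zeta))\mapsto p(u'(\zeta))$.

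On $\overline{\FF}(0\beta)$, the key observation is that for every $u\in \overline{\FF}(0\beta)$ the boundary loop $u|_{\partial D}$ represents $0\cdot \beta\in H_1(L;\Z)$, so $p\circ u|_{\partial D}\colon \partial D\to S^1$ has degree zero and lifts continuously to a map $\widetilde{\ell}_u\colon \partial D\to\R$, unique up to an additive integer. Averaging this lift over $\partial D$ and reducing modulo $\Z$ yields a continuous map
\[
F_\FF\colon \overline{\FF}(0\beta)\to S^1,\qquad F_\FF(u)=\frac{1}{2\pi}\int_{\partial D}\widetilde{\ell}_u(\theta)\,d\theta\pmod{\Z}.
\]
On a constant disk $u\equiv q\in L\subset\overline{\FF}(0\beta)$ the lift is constant, so $F_\FF|_L=p$, as required.

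The main obstacle is the compatibility of $F_\FF$ and $F_\TT$ on $\NN$. The idea is to track the lift through a pregluing $\Phi((u',\zeta),v)$: away from a small arc near $\zeta$ the boundary of the glued disk follows $u'|_{\partial D}$, while the $+1$ winding contributed by $v|_{\partial D}$ shows up as a step of $+1$ in $\widetilde{\ell}_{\Phi(\cdot)}$ near the angular position $\zeta$. From this one reads off that, regarded as classes in $H^1(\NN;\Z)$, both $F_\FF|_\NN$ and $F_\TT|_\NN$ equal the pullback of the generator of $H^1(L;\Z)=\Z$ under the evaluation $\widetilde{\ev}_s\colon \NN\to L$; hence the two $S^1$-valued maps are homotopic. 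A collar homotopy then lets us modify $F_\FF$ in a neighborhood of $\NN$ inside $\overline{\FF}(0\beta)$ to agree with $F_\TT|_\NN$ on the nose, and the two pieces glue to a global map $F\colon \BB\to S^1$ extending $p$.

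The hard step is the cohomological matching on $\NN$: everything else is formal once one has the lift-averaging formula and the tautological extension on $\TT$. An alternative route avoiding the integral average would be to produce a continuous section $z\colon \overline{\FF}(0\beta)\to\partial D$ equal to $\zeta$ in a collar of $\NN$ and set $F_\FF(u)=p(u(z(u)))$; this shifts the obstruction into the existence of such a section, but reduces to the same $H^1$-level matching.
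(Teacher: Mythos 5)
Your overall strategy is the same as the paper's: build the extension separately on the two pieces $\TT$ and $\overline{\FF}(0\beta)$ of $\BB$, check that the restrictions to $\NN$ are homotopic, and then interpolate in a collar. On $\TT$ your tautological extension $p\circ e$ is exactly what the paper does. On $\overline{\FF}(0\beta)$ you use an averaged lift of $p\circ u|_{\partial D}$, whereas the paper simply uses $\pi\circ\ev_1$ (evaluation at $1\in\partial D$); both are valid continuous maps on all of $\XX(0\beta)$ that restrict to $p$ on the constant disks, and since $\XX(0\beta)$ deformation retracts onto $L$ and $S^1$ is a $K(\Z,1)$ they are automatically homotopic, so the choice is cosmetic. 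The place where your proposal is weakest is exactly the step you flag as the ``hard step'': the matching on $\NN$. You assert, via an informal lift-tracking picture, that $[F_\FF|_\NN]=\widetilde{\ev}_s^*(\mathrm{gen})\in H^1(\NN;\Z)$, but you don't actually establish this. The paper's argument is tighter and worth internalizing: over $\Phi(\NN)\subset\XX(0\beta)$, the maps $\pi\circ\ev_1$ and $\pi\circ\widetilde{\ev}_s$ (the latter after identifying $\widetilde{\ev}_s\simeq\ev_s$, since gluing is $C^1$-close to pregluing) are both compositions of the natural evaluation map on the $\partial D$-bundle $\XX^*(0\beta)$ with two different sections of that bundle, and because the boundary loop of every disk in $\XX(0\beta)$ is a contractible loop in $L$, changing the evaluation point gives a homotopic map. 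That one observation makes the cohomological identification you need automatic, and would let you replace ``one reads off'' with an actual argument.
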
   

\begin{proof}
Let $\mathcal{X}(j\beta)$ denote the space of maps $(D,\partial D)\to (\C^{4k},L)$ with restriction to the boundary in the class $j\beta\in H_{1}(L)$. Then we have $\overline{\FF}(0\beta)\subset \XX(0\beta)$. The space $\XX(0\beta)$ fibers over the loop space of $L\approx S^{1}\times\Sigma$ with contractible fibers and the inclusion of the constant maps into this loop space has a right inverse $\ev_1\colon \XX(0\beta) \to L$ given by evaluation at $1\in\partial D$. The restriction of $\ev_{1}$ to $\overline{\FF}(0\beta)$ gives a retraction back to $L$ which we can compose with the projection $\pi\colon L\to S^1$. On the other piece $\TT$ of $\BB$ we have a different map $\widetilde{\ev}_{s}\colon \TT \to L$, see \eqref{fibermap}, which we can compose with the projection $\pi\colon L\to S^1$. These two maps define on the intersection $\FF(0\beta)\cap \TT=\NN\subset \BB$ two maps
\begin{align*}
  \pi\circ \ev_1,\; \pi\circ\tilde{\ev}_s \ \colon \ \NN \to S^1
\end{align*}
that are homotopic. Indeed, considering $\Phi(\NN)$ as a subset of the configuration space $\XX(0\beta)$, the two maps $\pi\circ\ev_1$ and $\pi\circ\widetilde{\ev}_s$ are the compositions of the natural evaluation map on $\XX^{\ast}(0\beta)$ with two different sections of the $\partial D$-bundle $\XX^{\ast}(0\beta)$ over $\Phi(\NN)$ and the evaluation map along the boundary of any  map in $\XX(0\beta)$ is a contractible loop.  

Since these two maps are homotopic we can use a collar neighborhood $\approx\NN\times [0,1]$ of the boundary in $\overline{\FF}(0\beta)$ to interpolate between them. This then gives an extension of the map $\pi\circ \ev_1\colon \overline{\FF}(0\beta)\to S^{1}$ over the remaining part $\TT$ of $\BB$. The lemma follows.
\end{proof}

\begin{proof}[Proof of Theorem \ref{thm:4k-1}]
  The manifold $S^{1}\times\Sigma$ bounds a stably parallelizable manifold $\BB$. By Lemma \ref{lma:goodfilling} we have a map $b : \BB \to S^1$ extending the projection map $S^{1}\times \Sigma\to S^{1}$. If $p\in S^{1}$ is a regular value then $b^{-1}(p)$ is an almost parallelizable filling of $\Sigma$; the connected component containing $\Sigma$ is therefore parallelizable, hence $[\Sigma]\in bP_{4k}$.
\end{proof}

Note that the argument \emph{inter alia} proves that if $\Sigma$ is a homotopy $4k-1$-sphere which does not belong to $bP_{4k}$, then $S^1 \times \Sigma$ admits no Lagrangian embedding into $\bC^{4k}$ of Maslov index $4k$.

\subsection{Comparing different cotangent bundles}
The results so far show that certain cotangent bundles of homotopy spheres are not symplectomorphic to cotangent bundles of the standard sphere. One can also compare $T^*\Sigma$ and $T^*\Sigma'$ for distinct homotopy spheres, by a trick that we present in this section. The trick is based on two simple observations that we explain first.

Let $M$ be a smooth manifold and let $\pi\colon T^{\ast}M\to M$ denote the projection. If $L\subset T^{\ast}M$ is a Lagrangian submanifold and if $U\subset M$ is an open subset then we say that $L$ is \emph{graphical} over $U$ if there are $1$-forms $\beta_1,\dots,\beta_k$ such that 
\[ 
L\cap \pi^{-1}(U) = \bigcup_{j=1}^{k}\Gamma_{\beta_j},
\] 
where $\Gamma_{\beta}$ is the graph of the $1$-form $\beta$. Furthermore if $(x_1,\dots, x_n)\in V\subset \R^{n}$ are coordinates on $U$ then we say that $L$ is locally constant with respect to these coordinates if 
\[ 
\beta_j=\sum_{i=1}^{n} a_{j}^{i}dx_i,
\] 
are constant $1$-forms for $j=1,\dots,k$. Below we will also use the \emph{norm} $|\beta_j|$ of such a constant $1$-form, which we take to be the norm induced by the standard metric on $\R^{n}$,
\[ 
|\beta_{j}|=\sqrt{(a_{j}^{1})^{2}+\dots+(a_{j}^{n})^{2}}.
\]

\begin{Lemma}\label{lma:graph}
Let $L\subset T^{\ast} M$ be an exact Lagrangian submanifold. Then there is a disk $D_q$ around a point $q\in M$ such that $L$ is graphical over $D_{q}$. Furthermore, if $x=(x_1,\dots, x_n)$ are coordinates on $D_q$ then after a Hamiltonian isotopy we can assume that $L$ is constant with respect to $x$ in some smaller disk $D_{q}'\subset D_{q}$ and that the norms of the distinct constant $1$-forms locally representing $L$ around $q$ are all distinct.  
\end{Lemma}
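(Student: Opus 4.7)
The plan is to carry this out in two stages: first establish the graphical structure over a small disk, and then construct a compactly supported Hamiltonian isotopy that flattens each sheet over a subdisk. For the first stage I would apply Sard's theorem to the projection $\pi|_L \colon L\to M$, a map between compact equidimensional manifolds. At a regular value $q$ the preimage $\{p_1,\dots,p_k\}\subset L$ is finite and $d(\pi|_L)$ is an isomorphism at each $p_j$. By the inverse function theorem $\pi$ is then a diffeomorphism from pairwise disjoint neighborhoods $V_j$ of the $p_j$ onto a common neighborhood of $q$. Shrinking, take $D_q$ small enough that $\pi^{-1}(D_q)\cap L = \bigsqcup_j V_j'$ with each $V_j'$ mapping diffeomorphically to $D_q$; then each $V_j' = \Gamma_{\beta_j}$ is the graph of a closed $1$-form $\beta_j$, closed because $V_j'$ is Lagrangian.

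To arrange distinct norms I use the freedom in the choice of both $q$ and the coordinates. The covectors $\beta_j(q)\in T^*_qM$ are pairwise distinct points of the fiber since the sheets $V_j'$ are disjoint in $T^*M$. Any choice of coordinates on $D_q$ induces a Euclidean inner product on $T^*_qM$, and the condition that the resulting norms $|\beta_j(q)|$ be pairwise distinct is open and dense in the space of such inner products; I pick coordinates for which it holds.

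For the Hamiltonian isotopy I would combine a base cutoff with fiber cutoffs in the trivialized coordinates $T^*D_q\cong D_q\times\R^n$. Write $\beta_j = df_j$ with $f_j(q)=0$ and let $\ell_j(x) = \beta_j(q)\cdot(x-q)$ be the linear approximation, so that $d(f_j-\ell_j) = df_j - \beta_j(q)$ vanishes at $q$. Pick a base bump $\chi$ that is $1$ on a subdisk $D_q'\Subset D_q$ and supported in $D_q$, together with fiber bumps $\rho_j$ on $\R^n$ having pairwise disjoint supports, each identically $1$ near $\beta_j(q)$. Set
$$
H(x,\xi) \;=\; \chi(x)\sum_{j=1}^k \rho_j(\xi)\bigl(f_j(x)-\ell_j(x)\bigr),
$$
extended by zero to $T^*M$. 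On a neighborhood of $\Gamma_{\beta_j}\cap\pi^{-1}(D_q')$ only the $j$-th summand contributes and $\rho_j$ is locally constant, so the Hamiltonian vector field is purely vertical and its time-$1$ flow carries $(x, df_j(x))$ to $(x,\beta_j(q))$; the other sheets of $L$ lie in $\{\rho_j=0\}$ and are undisturbed.

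The main obstacle is quantitative: during the $t\in[0,1]$ flow each sheet must remain inside the plateau region $\{\rho_j\equiv 1\}$ of its own fiber cutoff, to prevent derivatives of $\rho_j$ in $\xi$ from producing spurious base motion. This is achieved by shrinking $D_q$ enough that $\|df_j(x)-\beta_j(q)\|$ is small throughout $D_q$ — possible because $f_j-\ell_j$ vanishes to second order at $q$ — and then enlarging the plateaux of the $\rho_j$ relative to these displacements while keeping their supports pairwise disjoint. Everything else is routine verification.
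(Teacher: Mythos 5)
Your strategy matches the paper's in outline: Sard for the initial graphical disk, followed by a compactly supported Hamiltonian isotopy, controlled by shrinking the disk so the required vertical displacement stays below the separation of the sheets, to make the sheets constant over a subdisk. Your isotopy, built from a Hamiltonian of the form $H = \chi \sum_j \rho_j (f_j - \ell_j)$ with a base cutoff $\chi$ and pairwise-disjoint fiber cutoffs $\rho_j$, is a legitimate variant of the paper's construction, which instead cuts off the second-order Taylor remainder $A_j = f_j - \ell_j$ by a small-scale base cutoff $\phi$ (equal to $0$ near the origin) and notes $|d(\phi A_j) - dA_j| = O(\epsilon)$; both reduce to the same quantitative estimate.

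However, your argument for arranging distinct norms has a genuine gap. You claim that for a fixed regular value $q$ the condition that the $|\beta_j(q)|$ be pairwise distinct is open and dense in the space of inner products on $T_q^{*}M$. This fails whenever two of the covectors are antipodal: if $\beta_j(q) = -\beta_k(q)$, then for \emph{every} inner product $g$ one has $g(\beta_j,\beta_j)-g(\beta_k,\beta_k) = g(\beta_j - \beta_k,\;\beta_j + \beta_k) = 0$, so no choice of linear coordinates can separate the norms. Invoking the freedom to move $q$ does not obviously repair this either, since the relation $\beta_j \equiv -\beta_k$ could hold identically over the disk, and you give no argument that such a configuration is excluded. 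The paper avoids the issue by a different and more robust move: it first makes a \emph{small deformation of $L$ itself}. Because the $k$ sheets are pairwise disjoint over $D_q$, each can be shifted independently by a Hamiltonian supported near that sheet in $T^*M$, and after such a generic perturbation the covectors $\beta_j(q)$ have pairwise distinct norms in the chosen coordinates. You should insert this preliminary perturbation of $L$ rather than relying solely on the choice of coordinates and base point.
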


\begin{proof}
The projection $\pi\colon L\to M$ has a regular value at $q$ exactly when $L$ is graphical in some neighborhood of $q$, so the first statement is a consequence of Sard's lemma. Let $q\in M$ be such a point and fix a coordinate neighborhood $U$ with coordinates $x$ in a ball $B_{1}$ of radius $1$ around $q$. Then each sheet defines a smooth section of the bundle of $1$-forms over $U$, $\tilde\beta_{j}$, $j=1,\dots,k$. By general position, after small deformation we may assume that the norms of all the $1$-forms are distinct at the origin of our coordinate system. Consider now Taylor expansions at the origin of the unique primitives vanishing at the origin for these $1$-forms:
\[ 
f_{j}(x)=\sum_{i=1}^{n} a_{j}^{i}x_{i} \ + \ A_{j}(x), \quad\text{where }A_{j}(x)=\mathcal{O}(|x|^{2})
\]      
Let $\phi\colon B_{1}\to [0,1]$ be a smooth cut-off function that equals $1$ outside the disk of radius $2\epsilon $ and equals $0$ inside the disk of radius $\epsilon$ and such that $|d\phi|\le 5\epsilon^{-1}$. Consider the Lagrangian over $U$ given by the differentials of the functions
\[ 
g_{j}(x)=\sum_{i=1}^{n} a_{j}^{i}x_{i} \ + \  \phi(x)A_{j}(x)
\]      
near the origin, and which agrees with $L$ outside the cotangent bundle of the disk of radius $2\epsilon$. 
Noting that
\[ 
dg_{j}(x)=\sum_{i=1}^{n} a_{j}^{i} dx_{i} \ + \ A_{j}d\phi+\phi\, dA_j,
\quad\text{where }A_{j}d\phi+\phi\, dA_j=\mathcal{O}(\epsilon),
\] 
it is easy to see that $L$ is Hamiltonian isotopic to the exact Lagrangian obtained by replacing the graphs of $df_j$ over $U$ by the graphs of $dg_{j}$ over $U$ provided $\epsilon>0$ is sufficiently small. This finishes the proof. 
\end{proof}

Our next lemma will be used to change the smooth structure of Lagrangian submanifolds. Let $\beta$ be a constant $1$-form on $\R^{n}$ and consider its graph $\Gamma_{\beta}$ in $T^{\ast}\R^{n}$. Also, let $f$ be the primitive of $\beta$ given by the natural pairing: $f(x)=\langle\beta|x\rangle$. Consider the ball $B_{1}\subset \R^{n}$ of radius $1$ and its boundary $S^{n-1}$. Let $\psi\colon S^{n-1}\to S^{n-1}$ be a diffeomorphism and extend $\psi$ to a diffeomorphism $\psi'$ of a collar neighborhood $S^{n-1}\times (-\epsilon,\epsilon)\to S^{n-1}\times (-\epsilon,\epsilon)$ by 
\[ 
\psi'(x,t)=(\psi(x),t).
\]   
Then the differential of $\psi'$ induces a symplectomorphism 
\[  
\Psi\colon T^{\ast} (S^{n-1}\times (-\epsilon,\epsilon))\to T^{\ast} (S^{n-1}\times (-\epsilon,\epsilon)).
\]
Consider the symplectic manifold $X$ obtained by removing $T^{\ast} B_{1}$ from $T^{\ast}\R^{n}$ and then gluing it back by $\Psi$. Since $\Psi$ takes the $0$-section to the $0$-section, $X$ contains a Lagrangian $L_{0}$ diffeomorphic to $\R^{n}$ obtained by gluing the two $0$-sections. 

Consider next $\Gamma_{\beta}$. Note that $\Gamma_{\beta}$ induces a graphical Lagrangian over a neighborhood of the boundary of $S^{n-1}$ in $B_{1}$. More precisely, this Lagrangian is the graph of the $1$-form $d(f\circ\psi')$. Extending $f\circ\psi'$ to a function $h\colon B_{1}\to\R$ we get a Lagrangian $L_{h}\subset X$ given by $\Gamma_{\beta}$ outside $B_{1}$ and by the graph of $dh$ inside $B_{1}$.   

\begin{Lemma}\label{lma:hcobordism}
With notation as above, if $n>4$ then, for any $\epsilon>0$ there exists a Hamiltonian isotopy of $L_h$ supported in $T^{\ast}B_{2}$ such that $L_{h}\cap L_{0}=\varnothing$. 
\end{Lemma}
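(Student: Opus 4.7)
The plan is to reduce the displacement problem to finding a critical-point-free extension of a prescribed boundary function on $B_1$, and then to solve that extension problem using the dimension hypothesis $n>4$.

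First I would observe that the lemma is only non-vacuous when $\beta\neq 0$ (which I will assume; otherwise $L_h$ and $L_0$ agree on all of $X\setminus T^{\ast}B_1$ and no Hamiltonian isotopy supported in $T^{\ast}B_2$ can separate them). Granting this, the graph $\Gamma_\beta$ is disjoint from the zero section of $T^{\ast}(\R^n\setminus B_1)$, and hence $L_h\cap L_0\subset T^{\ast}B_1$ coincides with the critical set of $h$ inside $B_1$. The key step is therefore to produce a second smooth function $h'\colon B_1\to\R$ such that (i) $h'|_{\mathrm{collar}}=f\circ\psi'$ and (ii) $dh'$ vanishes nowhere on $B_1$. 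Granting such an $h'$, the linear interpolation $h_s:=(1-s)h+sh'$ yields a smooth family of extensions and hence an isotopy of exact Lagrangians $L_{h_s}=\Gamma_{dh_s}$ inside $T^{\ast}B_1$, generated by the time-independent Hamiltonian $K:=h'-h$. By construction $K$ vanishes on the collar and outside $B_1$, so it extends by zero to a smooth Hamiltonian compactly supported in the interior of $B_1\subset B_2$; the resulting Hamiltonian isotopy is the desired one, and its endpoint $L_{h'}$ is disjoint from $L_0$.

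Next I would construct $h'$. The condition that $h'$ is a submersion is equivalent to $dh'$ being a nowhere-vanishing closed (automatically exact, since $B_1$ is simply connected) $1$-form on $B_1$. On the collar, $\psi'^{\ast}\beta=d(f\circ\psi')$ is already nowhere zero, so the task is to extend this nowhere-vanishing $1$-form into the interior. After trivialising $T^{\ast}B_1\cong B_1\times \R^n$, this becomes the problem of extending a map $\partial B_1\to S^{n-1}$ (the normalised Gauss map of $\psi'^{\ast}\beta$) across $B_1$; the sole obstruction lives in $\pi_{n-1}(S^{n-1})\cong\Z$ and equals the degree of that boundary map.

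The main obstacle --- and the step where the hypothesis $n>4$ is used essentially --- is proving that this degree vanishes. My strategy is to produce $h'$ directly, by extending $\psi'$ (currently defined on the collar) to a smooth map $\tilde\psi\colon B_1\to\R^n$ --- not necessarily an embedding, and possibly highly non-injective near the origin --- such that the pulled-back $1$-form $\tilde\psi^{\ast}\beta$ is nowhere vanishing, and then setting $h':=f\circ\tilde\psi$. The existence of such a $\tilde\psi$ may be reduced to a standard obstruction-theoretic statement: the relevant obstruction to extending a diffeomorphism $\psi$ of $S^{n-1}$ to a smooth map $D^n\to\R^n$ whose differential nowhere annihilates the fixed covector $\beta$ lies in a low-dimensional homotopy group that vanishes for $n>4$ by $h$-cobordism/smoothing-theoretic arguments. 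An alternative Morse-theoretic route is to extend $f\circ\psi'$ to any Morse function on $B_1$, check that its interior critical points occur in cancelling index pairs (by a signed-count computation using the prescribed collar data), and then cancel them using the Whitney trick, which is available in precisely the range $n>4$. Either way, the principal technical difficulty is controlling these cancellations relative to the fixed collar boundary condition.
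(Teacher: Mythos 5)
Your reduction of the lemma to producing a critical-point--free extension $h'$ (with prescribed collar values) and then interpolating by the Hamiltonian $K=h'-h$ supported in $T^*B_1$ is correct, and it matches what the paper does: the paper phrases this as deforming $\Gamma_{dh}$ to $\Gamma_{dh'}$ using a primitive of the fiberwise difference of the two graphs. Your preliminary remark that the lemma is meaningful only when $\beta\neq 0$ is also a reasonable reading of the paper's implicit assumption.

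The gap is in the construction of $h'$, which is exactly where the whole content of the lemma lies. Neither of your two proposed routes is actually carried out, and both are missing the key geometric device in the paper's proof. The paper replaces the ball $B_1$ by the smallest cylinder $C=D^{n-1}\times[-a,a]$ over the hyperplane $\ker(\beta)$ containing $B_{1+\epsilon}$, and extends $h$ by the linear function $f$ on $C\setminus B_1$. The point of the cylinder is that the boundary behaviour of $h$ on $C$ is then that of a Morse function on a \emph{product cobordism}: the gradient is transverse inward on the bottom face, transverse outward on the top face, and tangent along the lateral boundary. This is exactly the setting where the relative $h$-cobordism theorem/handle cancellation applies (in dimension $n\geq 6$, which holds in the non-trivial case since $\psi$ not smoothly isotopic to the identity forces $\Theta_n\neq 0$), yielding a critical-point--free extension. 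In contrast, on $B_1$ itself the boundary values $f\circ\psi'|_{\partial B_1}$ have both a maximum and a minimum, so $h$ is not a Morse function on a cobordism from one boundary component to another; there is no clean handle structure to which the Whitney trick can be applied directly. Your Morse-theoretic route asserts that the interior critical points ``occur in cancelling index pairs by a signed-count computation,'' but a vanishing signed count is a much weaker statement than cancellability: cancellability is precisely what the $h$-cobordism theorem supplies, and you have not set up a cobordism on which to run it. Your obstruction-theoretic route has an additional independent gap: the degree in $\pi_{n-1}(S^{n-1})$ obstructs extending $\psi'^*\beta$ to a nowhere-vanishing $1$-form, but it does not by itself produce a nowhere-vanishing \emph{closed} (hence exact) $1$-form $dh'$ with prescribed boundary values -- that refinement is itself a Morse-theoretic/$h$-principle statement of the same difficulty as the one you are trying to avoid. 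Finally, your assertion that the degree obstruction ``vanishes for $n>4$'' is unsupported; $\pi_{n-1}(S^{n-1})\cong\Z$ for all $n\geq 2$, so the vanishing would have to be argued for this particular boundary map, which you do not do.
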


\begin{proof}
Consider first the (trivial) case when $\psi$ is isotopic to the identity. Then by isotoping the gluing map we obtain a graphical Lagrangian in $T^{\ast}\R^{n}$ that agrees with $\Gamma_{\beta}$ outside $T^{\ast}B_{1}$. One can use a primitive of the $1$-form given by the fiberwise difference of the two graphs $\Gamma_{h}$ and $\Gamma_{\beta}$ to deform $\Gamma_{h}$ to $\Gamma_{\beta}$. Since $\Gamma_{\beta}\cap\Gamma_{0}=\varnothing$ this finishes the proof.

Consider next the less trivial case when $\psi$ is not isotopic to the identity. Note first that this implies $n\ge 6$ by \cite{KM}. The main difference in this case is that it is not \emph{a priori} clear that there exists a function playing the role of the initial function $f$ above. To see that such a function indeed does exist, consider the smallest cylinder over the plane $\ker(\beta)\subset\R^{n}$ that contains $B_{1+\epsilon}$. Then the gradient of the function $h$ points into the cylinder along the bottom and out of it along the top and is boundary parallel along the rest of the boundary. After small perturbation, the function $h$ has non-degenerate critical points insider the cylinder and by handle theory (the $h$-cobordism theorem) all the critical points can be canceled to straighten the function on the cylinder. This results in a function without critical points, which then enables one to construct the desired Hamiltonian isotopy exactly as before.      
\end{proof}

Using Lemmas \ref{lma:graph} and \ref{lma:hcobordism} we can distinguish cotangent bundles of distinct homotopy spheres neither of which are the standard sphere. More precisely we have the following result, which then implies Theorem \ref{Thm:One} from the Introduction.

\begin{Lemma} \label{Lem:KraghTrick} 
Let $\Sigma$ and $\Sigma'$ be  homotopy spheres of dimension $n>4$ and suppose 
 $\Sigma$ admits a Lagrangian embedding in $T^*\Sigma'$.  A choice of orientation on $\Sigma'$ induces one on $\Sigma$ such that there is a Lagrangian embedding $\Sigma'' \to T^*S^n$, where $[\Sigma''] = [\Sigma]-[\Sigma'] \in \Theta_n$.
\end{Lemma}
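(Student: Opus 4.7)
The plan is to globally cut-and-paste $T^*\Sigma'$ into $T^*S^n$ and track the induced modification of $\iota(\Sigma)$. Fix the chosen orientation on $\Sigma'$ and pick $\psi\in\Diff^+(S^{n-1})$ representing $[\Sigma']\in\Theta_n$ under the clutching identification; orient $\Sigma$ so that the composition $\pi\circ\iota\colon\Sigma\to\Sigma'$ has degree $+1$, which is possible since this degree is $\pm 1$ by \cite{Seidel:kronecker}. By Lemma~\ref{lma:graph}, after a Hamiltonian isotopy $\iota(\Sigma)$ is graphical over a small ball $B_1\subset\Sigma'$ with $k$ sheets $\Gamma_{\beta_1},\dots,\Gamma_{\beta_k}$ given by constant $1$-forms of pairwise distinct norms $|\beta_j|$. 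Then cut $T^*\Sigma'$ along a collar of $T^*\partial B_1$ and reglue via the cotangent lift of an extension of $\psi^{-1}$; since this undoes the clutching of $\Sigma'$, the resulting symplectic manifold is $T^*S^n$.

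Each severed sheet $\Gamma_{\beta_j}$ now needs to be capped by a Lagrangian disk over $B_1$. For each $j$, Lemma~\ref{lma:hcobordism} applied with $\beta=\beta_j$ and primitive $f_j(x)=\langle\beta_j,x\rangle$ yields an extension $h_j\colon B_1\to\R$ whose graph $L_{h_j}$ is Hamiltonian isotopic (rel boundary) to a disk disjoint from the new zero section. The distinct norms $|\beta_j|$ place the fills at distinct fiber heights; by applying the $h$-cobordism argument iteratively in pairs, treating $L_{h_{j'}}$ as playing the role of $L_0$ for $L_{h_j}$, we arrange these fills to be simultaneously pairwise disjoint and disjoint from the new zero section. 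Gluing them to the unmodified portion of $\iota(\Sigma)$ produces a closed exact Lagrangian $\Sigma''\subset T^*S^n$.

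Topologically, $\Sigma''$ is obtained from $\Sigma$ by excising each of the $k$ disks $D_j\subset\Sigma$ (the preimages of $B_1$ under $\pi\circ\iota$) and regluing via the diffeomorphism $\phi_j\colon\partial D_j\to\partial D_j$ induced by $\psi^{-1}$ through $\pi|_{\partial D_j}$. If $\epsilon_j\in\{\pm 1\}$ is the local degree of $\pi|_{D_j}$, then $[\phi_j]=-\epsilon_j[\Sigma']\in\Theta_n$, since conjugation by an orientation-reversing diffeomorphism of $S^{n-1}$ negates the clutching class. Summing the connect-sum contributions and using $\sum_j\epsilon_j=\deg(\pi\circ\iota)=1$ yields
\[
[\Sigma'']\ =\ [\Sigma]+\sum_{j=1}^{k}[\phi_j]\ =\ [\Sigma]-\Big(\sum_j\epsilon_j\Big)[\Sigma']\ =\ [\Sigma]-[\Sigma'],
\]
as required.

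The main obstacle is arranging the simultaneous disjointness of all the Lagrangian fills $L_{h_j}$ and the new zero section: the single-sheet case is Lemma~\ref{lma:hcobordism}, and the multi-sheet case requires an iterative $h$-cobordism argument exploiting the distinct norms $|\beta_j|$ to keep the sheets at separated fiber heights. The orientation bookkeeping in the concluding connect-sum calculation must also be handled carefully so that the signed degree cancellation produces a single factor $-[\Sigma']$ rather than $-k[\Sigma']$.
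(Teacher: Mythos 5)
Your overall strategy matches the paper's: make $\iota(\Sigma)$ graphical over a small disk in $\Sigma'$ via Lemma~\ref{lma:graph}, reglue along a smaller disk by the inverse clutching diffeomorphism (turning $T^*\Sigma'$ into $T^*S^n$), cap the severed sheets, and compute the diffeomorphism type of the resulting Lagrangian. Your concluding bookkeeping, $[\Sigma'']=[\Sigma]-\bigl(\sum_j\epsilon_j\bigr)[\Sigma']=[\Sigma]-[\Sigma']$, is in fact more explicit than the paper's and is correct, including the sign flip under conjugation by an orientation-reversing diffeomorphism.

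However, you have a genuine gap exactly where you flag ``the main obstacle'': the simultaneous pairwise disjointness of the caps. You propose to ``apply the $h$-cobordism argument iteratively in pairs, treating $L_{h_{j'}}$ as playing the role of $L_0$ for $L_{h_j}$,'' but Lemma~\ref{lma:hcobordism} as stated and proved only makes a single $L_h$ disjoint from the reglued zero section $L_0$; it does not address disjointing two nontrivial caps from each other, nor does your proposal explain why the isotopy applied to $L_{h_j}$ leaves the previously arranged $L_{h_{j'}}$ untouched. The remark that ``the distinct norms place the fills at distinct fiber heights'' only controls the region near $\partial B_1$ where the sheets are still constant graphs; inside the regluing region the $h$-cobordism step has destroyed any such ordering. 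The paper sidesteps this entirely by producing one nowhere-vanishing exact $1$-form $\gamma_1$ extending $\beta_1$ (this is the real content of Lemma~\ref{lma:hcobordism}: a function with no critical points over the modified disk), and then defining the remaining caps uniformly by $\gamma_j=\tfrac{|\beta_j|}{|\beta_1|}\,\gamma_1\circ R_j$ for orthogonal $R_j$ with $\beta_j=\tfrac{|\beta_j|}{|\beta_1|}\beta_1\circ R_j$. Disjointness of $\Gamma_{\gamma_k}$ and $\Gamma_{\gamma_l}$ then reduces to a pointwise linear-algebra fact: $\gamma_1(x)\ne 0$ and $|\beta_k|R_k-|\beta_l|R_l$ is injective because $R_k,R_l$ are orthogonal and $|\beta_k|\ne|\beta_l|$. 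You should replace the iterative $h$-cobordism claim with this explicit construction (or supply a genuine argument that the iteration terminates without destroying earlier disjointness).
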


\begin{proof} Let $\pi: T^*\Sigma' \rightarrow \Sigma'$ denote the projection. The main result of \cite{FSS,Nadler} implies that $\pi|_{\Sigma}$ has degree $\pm 1$.  Fix an orientation on $\Sigma'$, and then take the induced orientation on $\Sigma$ so  $\pi|_{\Sigma}$ has degree one. By Lemma \ref{lma:graph} we may assume that there is a disk neighborhood $U$ around a point $q\in\Sigma'$ with coordinates $x\in B_{1} \subset U$ where $\Sigma$ is given by a collection of $2k+1$ constant $1$-forms $\beta_1,\dots,\beta_{2k+1}$ of pairwise distinct norms, $|\beta_{j}|\ne |\beta_{l}|$ if $j\ne l$. Here $\pi$ is orientation-preserving on $k+1$ disks and orientation-reversing on $k$ disks in $\Sigma$.

We now apply the cut and paste operation of Lemma \ref{lma:hcobordism} to the disk $B_{1/2}\subset \Sigma'$ using the inverse $\psi\colon S^{n-1}\to S^{n-1}$ of the patching diffeomorphism of $\Sigma'$. The resulting symplectic manifold $X$ is then symplectomorphic to the cotangent bundle of the standard sphere $T^{\ast} S^{n}$ and the canonical Lagrangian submanifold $L_0$ of Lemma \ref{lma:hcobordism} is simply the $0$-section. Again by Lemma \ref{lma:hcobordism} there is a non-zero exact $1$-form $\gamma_1$ over $B_{1}$ that extends $\beta_1$ (which is constant in a neighborhood of $\partial B_{1}$). For $j>1$, pick an orthonormal linear transformation $R_{j}\colon\R^{n}\to\R^{n}$ such that 
\[ 
\beta_{j}= \tfrac{|\beta_{j}|}{|\beta_{1}|} \beta_{1}\circ R_{j}.
\] 
Then
\[ 
\gamma_{j}= \tfrac{|\beta_{j}|}{|\beta_{1}|}\gamma_{1}\circ R_{j},
\]
is an exact non-zero $1$-form that extends $\beta_{j}$. 

We show next that the graphs of $\gamma_{k}$ and $\gamma_{l}$ do not intersect if $k\ne l$. Indeed, such an intersection would correspond to 
\begin{equation}\label{eq:1form=0}
\gamma_{k}(x)-\gamma_{l}(x)=|\beta_{1}|^{-1}\gamma_{1}(x)\circ(|\beta_{k}|R_k-|\beta_{l}|R_{l})=0
\end{equation}
at some $x\in B_{1}$. However $\gamma_{1}(x)\ne 0$ for all $x$ and hence \eqref{eq:1form=0} implies that 
\[ 
(|\beta_{k}|R_k-|\beta_{l}|R_{l})v=0
\]   
for some $v$ with $|v|=1$. That however contradicts $R_{j}$ and $R_{k}$ being orthonormal and $|\beta_k|\ne |\beta_{l}|$.

To finish the proof we take $\Sigma''\subset X$ to be given by $\Sigma$ outside $B_{1}$ and by the graphs $\Gamma_{\gamma_j}$ inside $B_{1}$, with the orientation inherited from $\Sigma$. Then $\Sigma''$ is an embedded Lagrangian submanifold, and the oriented diffeomorphism class of $\Sigma''$ is $[\Sigma]-[\Sigma']$. 
\end{proof}

\section{Further observations}

\subsection{Lagrangians in projective space} Abouzaid \cite{Abouzaid} showed that there are constraints going beyond the homotopy type for exact Lagrangians in cotangent bundles.  It is also natural to look for such constraints on monotone Lagrangians in closed symplectic manifolds.  A first result of that flavour, Theorem \ref{Thm:Two}, follows fairly directly from Theorem \ref{Thm:One}.

\begin{Theorem}
Let $n=4k-1$ and let $\Sigma$ be a homotopy $n$-sphere.  If there is a Lagrangian embedding $\bR\bP^n \# \Sigma \hookrightarrow \bC\bP^n$ then $[\Sigma \# \Sigma] \in bP_{4k}$.
\end{Theorem}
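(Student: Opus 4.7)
The plan is to apply Biran's circle bundle construction to reduce the question to the setting treated in the proof of Theorem~\ref{thm:4k-1}. Embed $\bC\bP^{n}$ as the hyperplane $\{z_{n+1}=0\}\subset\bC\bP^{n+1}$ and let $L = \bR\bP^{n}\#\Sigma$ be the given Lagrangian in $\bC\bP^{n}$. The unit circle bundle of the normal bundle $\mathcal{O}(1)$ of $\bC\bP^{n}$ in $\bC\bP^{n+1}$, restricted to $L$, yields a monotone Lagrangian $\widetilde{L}\subset\bC\bP^{n+1}\setminus\bC\bP^{n}\cong B^{2n+2}\subset\C^{n+1}=\C^{4k}$. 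Topologically $\widetilde{L}$ is the $S^{1}$-bundle over $L$ whose Euler class is the restriction of $c_1(\mathcal{O}(1))$, the generator of $H^2(L;\Z)=\Z/2$. The non-trivial double cover of $L$ is $\widehat{L}=S^{n}\#\Sigma\#\Sigma$, a homotopy $(4k-1)$-sphere representing $[\Sigma\#\Sigma]\in\Theta_{4k-1}$, and the pullback of the circle bundle to $\widehat{L}$ is trivial; hence $\widetilde{L}$ is the mapping torus of the free deck involution $\tau\colon\widehat{L}\to\widehat{L}$, and the mapping-torus projection $\pi\colon\widetilde{L}\to S^{1}$ has fiber $\widehat{L}$.

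Next I would compute the minimal Maslov number of $\widetilde{L}$ in $\C^{4k}$. Since $n\geq 3$, $\pi_2(L)=0$ and the exact sequence of $S^{1}\to\widetilde{L}\to L$ gives $\pi_1(\widetilde{L})=\Z$ with the $S^{1}$-fiber mapping to twice the generator. The fiber bounds the complex disk $\{r e^{i\theta}x_{0}\}$ in the complex line through a lift $x_{0}$; along its boundary the tangent frame of $\widetilde{L}$ rotates by $e^{i\theta}$ on each of the $n+1$ complex coordinates, so the disk has Maslov index $2(n+1)=8k$. Therefore the generator $\beta\in\pi_1(\widetilde{L})$ has Maslov index $4k$, equal to the complex dimension of the ambient space; this is exactly the Maslov configuration required by Sections~2.2 and~2.3.

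The argument now follows Sections~2.2 and~2.3 with $\widetilde{L}$ in place of $S^{1}\times\Sigma$. The Lagrangian $\widetilde{L}$ is displaceable (it lies in a ball) and monotone with minimal Maslov $4k\geq 2$, so trivial-class disks are constants and the moduli spaces $\FF(j\beta),\MM(j\beta),\NN$ together with the gluing map $\Phi$ produce a compact $C^{1}$-manifold $\BB$ with $\partial\BB=\widetilde{L}$. For the stable trivialization of $T\BB$ one adapts Lemma~\ref{Lem:GaussInterpolate}: since $\Sigma$ is {\sc pl}-standard by \cite{KM}, $L$ is {\sc pl}-homeomorphic to $\bR\bP^{n}$ and $\widetilde{L}$ is {\sc pl}-homeomorphic as a circle bundle to the model Biran lift $\widetilde{L}_0\cong S^{1}\times S^{n}$ of $\bR\bP^{n}\subset\bC\bP^{n}$; Hudson's theorem supplies an ambient {\sc pl}-isotopy in $\C^{4k}$, and the Gauss-map comparison in $[S^{1}\times S^{4k-1},U/O]$ is controlled by $\pi_{4k-1}(U/O)$ and $\pi_{4k}(U/O)$ via Bott periodicity and \cite{AbKr}, as in the original lemma. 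Finally, as in Lemma~\ref{lma:goodfilling}, the projection $\pi\colon\widetilde{L}\to S^{1}$ extends to a map $b\colon\BB\to S^{1}$, because on the gluing region $\NN$ the two candidate extensions $\pi\circ\ev_{1}$ and $\pi\circ\widetilde{\ev}_{s}$ are homotopic: the boundary loop of any disk in the trivial relative homotopy class is contractible in $\widetilde{L}$ and hence in $S^{1}$.

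A regular value $p\in S^{1}$ of $b$ then cuts out a stably parallelizable compact submanifold $b^{-1}(p)\subset\BB$ whose intersection with $\partial\BB$ is a single copy of $\widehat{L}$; the connected component of $b^{-1}(p)$ containing $\widehat{L}$ is therefore a parallelizable filling of $\widehat{L}\approx_{\mathrm{diff}}\Sigma\#\Sigma$, giving $[\Sigma\#\Sigma]\in bP_{4k}$. The main obstacle I anticipate is the Gauss-map / index-bundle step: checking that the analysis of \cite[Section~7]{EkholmSmith} still produces a stable trivialization of $T\BB$ with the Biran lift $\widetilde{L}_{0}$ of $\bR\bP^{n}$ as the reference Lagrangian, in place of the $S^{1}\times S^{4k-1}$ coming from the Whitney-sphere surgery used in the proof of Theorem~\ref{thm:4k-1}.
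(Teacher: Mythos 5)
Your overall architecture matches the paper's: pass through Biran's circle bundle construction to get a monotone Lagrangian $\widetilde L \subset B^{2n+2} \subset \C^{n+1}$ of minimal Maslov number $n+1 = 4k$, identify it (after pulling back to the double cover) with a fibration over $S^1$ with fibre $\Sigma\#\Sigma$, and then run the displacement/moduli-space machinery of Section 2.2--2.3 together with the circle-valued extension of Lemma \ref{lma:goodfilling}. However, there are two genuine gaps, one of which you did not flag.

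The first and more serious gap is that you \emph{assert} that the Euler class of the circle bundle over $L = \bR\bP^n\#\Sigma$ is the non-zero element of $H^2(L;\Z) \cong \Z/2$. This is not automatic: a priori $c_1(\mathcal{O}(1))$ could restrict to zero on $L$, in which case $\widetilde L \cong S^1 \times L$, $\pi_1(\widetilde L) \cong \Z\times\Z/2$, and your identification of $\pi_1(\widetilde L) \cong \Z$ with fibre mapping to twice a generator would fail, as would the conclusion that the minimal Maslov number is $4k$ rather than $8k$. The paper closes this gap Floer-theoretically: $HF^*(L,L)$ is a module over $QH^*(\bC\bP^n)$, which contains an invertible degree-2 class, forcing two-periodicity of $HF^*(L,L)$; this rules out minimal Maslov $2n+2$ and simultaneously forces the hyperplane class to restrict non-trivially to $H^2(L;\Z_2)$. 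This input is indispensable and your argument has no substitute for it.

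The second gap is in the Gauss-map comparison for $n \equiv 3 \pmod 8$ (i.e.\ $n=8k-5$), where $\pi_n(U/O) \cong \Z/2$. You propose to control the obstruction ``via Bott periodicity and \cite{AbKr}, as in the original lemma.'' But the AbKr constraint used in Lemma \ref{Lem:GaussInterpolate} applies to exact Lagrangian embeddings in \emph{cotangent bundles} (there one compares the Gauss map of $\iota' : \Sigma \to T^*S^{4k-1}$ to that of the $0$-section). Here $L$ sits in $\bC\bP^n$ and $\widetilde L$ sits in $\C^{n+1}$, neither of which is a cotangent bundle, so that theorem does not apply as stated. The paper instead observes that the relevant obstruction on the top $n$-cell of $\widehat L$ is pulled back under a degree-$2$ map from a cell of $\bR\bP^n\#\Sigma$, hence is $2$-divisible in $\Z/2$ and therefore vanishes. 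Your anticipated ``main obstacle'' is indeed an obstacle, but the fix is this divisibility observation rather than an appeal to \cite{AbKr}; and the reference Lagrangian the paper ultimately compares to is the Whitney-sphere surgery model of \cite[Corollary 1.3]{EkholmSmith} rather than the Biran lift $\widetilde L_0$, which streamlines the reduction to the results already proved there.

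A smaller point: you replace the paper's invocation of Damian's theorem (that a monotone Lagrangian in $\C^{n+1}$ of minimal Maslov $n+1$ fibres over $S^1$ with homotopy-sphere fibre) by the direct mapping-torus description coming from the double cover. This is fine and is essentially the ``by direct inspection'' alternative the paper itself mentions; the two routes agree once the Euler-class non-triviality from the first paragraph is in hand.
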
 

\begin{proof}
Consider first the case $n=8k-1$. 
Let $L=\bR\bP^n \# \Sigma$, with $[\Sigma]\in\Theta_{8k-1}$.  Since $L \simeq \bR\bP^n$ has 2-torsion $H_1$, the embedding $L \hookrightarrow \bC\bP^n$ is necessarily monotone, of Maslov index either $n+1$ or $2n+2$.
The group $HF^*(L,L)$ is a module over $QH^*(\bC\bP^n)$ (see e.g. \cite{Seidel:LFDAS} or \cite{BiranCornea}), which has an invertible element of degree 2. It follows that $HF^*(L,L)$ is two-periodic.  This implies that the Maslov index of $L$ is $n+1$ and that the restriction of the hyperplane class from $H^2(\bC \bP^n;\bZ_2)$ to $H^2(L;\bZ_2)$ is non-trivial.

  Let $\hat{L}$ denote the restriction to $L$ of the unit sphere bundle of the line bundle $\mathcal{O}(1) \rightarrow \bC\bP^n$, which is a circle bundle over $L$.  Via Biran's circle bundle construction \cite{Biran}, one obtains a monotone Lagrangian embedding of $\hat{L}$ into $\bC\bP^{n+1} \backslash \bC\bP^n = B^{2n+2} \subset \bC^{n+1}$, again of Maslov index $n+1$. General results of Damian \cite{Damian} imply that any monotone Lagrangian in $\bC^{n+1}$ of Maslov index $n+1$ is diffeomorphic to a fibration over $S^1$ with fibre a homotopy $n$-sphere.  Using the fact that $\hat{L}$ is also circle-fibred, or by direct inspection (pull back the circle bundle to the double cover, where it becomes trivial), one checks that $\hat{L}$ is diffeomorphic to $S^1 \times (\Sigma \# \Sigma)$.  

Since $8 | (n+1)$, the Lagrangian Gauss map is necessarily homotopic to that of the surgery on the Whitney immersion, which implies that $\hat{L}$  bounds a parallelisable manifold $\mathcal{B}$, cf. \cite[Corollary 1.3]{EkholmSmith} (the dimension constraint arises via $\pi_{8k-1}(U/O) = 0$).  Arguing as in Theorem \ref{thm:4k-1}, one can assume that $H_1(\hat{L}) \rightarrow H_1(\mathcal{B})$ is (split) injective, which then implies that $[\Sigma \# \Sigma] \in bP_{8k}$ by taking the regular value of a circle-valued function as before.

Finally, when $n=8k-5$, the same argument carries over except that there is one additional step in the argument with the Gauss map.  Namely, $\pi_{8k-5}(U/O) = \bZ/2$. The homotopy of Lagrangian Gauss maps involves an extension problem over an $n$-cell in $\hat{L}$ where the relevant map is lifted from a corresponding map on a top cell of $\bR\bP^n \# \Sigma$, which means the obstruction lies in the set of 2-divisible elements of $\pi_{8k-5}(U/O)$, hence again vanishes.  This completes the proof.
\end{proof}

\subsection{Coverings}
The \emph{inertia group} of an $n$-manifold  $M$ is the subgroup $I(M)\leq \Theta_n$ of oriented homotopy $n$-spheres $\Sigma$ for which the connect sum $M\#\Sigma$ is diffeomorphic  to $M$.  If $M$ is not orientable, then (by moving the point at which one connect sums around an orientation-reversing loop) one sees that $2\Theta_n \leq I(M)$.  For orientable manifolds, $I(M)$ is often much smaller.  Browder proved \cite{Browder} that for a manifold homotopy equivalent to $\bR\bP^{4k-1}$, $I(M)$ has trivial intersection with $bP_{4k}$; it does not seem to have been computed completely in the literature.  

\begin{Proposition}
Let $\Sigma$ be a homotopy $(4k-1)$-sphere.
If $[\Sigma] \in \Theta_{4k-1}$ satisfies $2[\Sigma] \not \in bP_{4k}$, then $T^*(\bR\bP^{4k-1})$ and $T^*(\bR\bP^{4k-1}\#\Sigma)$ are not symplectomorphic.
\end{Proposition}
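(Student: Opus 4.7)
The plan is to reduce to Theorem \ref{thm:4k-1} by passing to the universal double cover. Suppose for contradiction that there exists a symplectomorphism $\Psi \colon T^{\ast}\bR\bP^{4k-1} \to T^{\ast}(\bR\bP^{4k-1}\#\Sigma)$. Both sides deformation retract onto their zero sections and in particular have fundamental group $\bZ/2$, so $\Psi$ induces an isomorphism on $\pi_{1}$ and lifts to a symplectomorphism $\tilde{\Psi}$ between universal covers. Since the cotangent bundle construction intertwines with covering space theory (the cotangent bundle of the universal cover of $M$ is the universal cover of $T^{\ast}M$, with the tautological symplectic structure), $\tilde{\Psi}$ is a symplectomorphism $T^{\ast}S^{4k-1} \cong T^{\ast}\tilde{N}$, where $\tilde{N}$ denotes the universal cover of $N = \bR\bP^{4k-1}\#\Sigma$.

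The key step is to identify $\tilde{N}$ as an oriented smooth manifold. Because $n = 4k-1$ is odd, $\bR\bP^{n}$ is orientable and the antipodal deck transformation $\sigma \colon S^{n}\to S^{n}$ is orientation-preserving. Performing connect sum with $\Sigma$ at a point $p\in\bR\bP^{n}$ corresponds upstairs to performing connect sums at the two $\sigma$-related preimages of $p$ in $S^{n}$, with the two inserted copies of $\Sigma\setminus D^{n}$ being $\sigma$-translates of one another. Since $\sigma$ preserves orientation, both inserted summands are oriented-diffeomorphic to $\Sigma$, so $\tilde{N}\cong S^{4k-1}\#\Sigma\#\Sigma$, which represents the class $2[\Sigma] \in \Theta_{4k-1}$.

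Combining the two steps, $\tilde{\Psi}$ yields a symplectomorphism $T^{\ast}S^{4k-1}\cong T^{\ast}(\Sigma\#\Sigma)$, and composing the zero section of the target with $\tilde{\Psi}^{-1}$ gives a Lagrangian embedding of the homotopy sphere $\Sigma\#\Sigma$ into $T^{\ast}S^{4k-1}$. Theorem \ref{thm:4k-1} then forces $[\Sigma\#\Sigma] = 2[\Sigma] \in bP_{4k}$, contradicting the hypothesis $2[\Sigma]\notin bP_{4k}$. The only place where care is needed is the identification of $\tilde{N}$ with $\Sigma\#\Sigma$ rather than with $\Sigma\#(-\Sigma) = 0\in\Theta_{n}$ (which would yield no information), and this is precisely where the hypothesis that $n = 4k-1$ is odd enters, via $\sigma$ being orientation-preserving.
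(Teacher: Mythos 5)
Your proposal is correct and takes essentially the same route as the paper, whose proof reads in full ``This is immediate from Theorem \ref{thm:4k-1} on taking double covers.'' You simply supply the details the paper leaves implicit: lifting the symplectomorphism to universal covers, identifying the cover of $\bR\bP^{4k-1}\#\Sigma$ with $S^{4k-1}\#\Sigma\#\Sigma$ via orientation-preservation of the antipodal map in odd dimension, and applying Theorem \ref{thm:4k-1} to the resulting Lagrangian embedding of $\Sigma\#\Sigma$ in $T^*S^{4k-1}$.
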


\begin{proof} 
This is immediate from Theorem \ref{thm:4k-1} on taking double covers. 
\end{proof}

Note that one sees \emph{a posteriori} that the zero-sections are not diffeomorphic, hence $[\Sigma] \not \in I(\bR\bP^{4k-1})$.  For instance, $|\Theta_{23} / bP_{24}| = 48$, and the result can be applied to distinguish the cotangent bundle of a fake $\bR\bP^{23}$. More generally, when a finite group $G$ acts freely on $S^{2k-1}$, Theorem \ref{Thm:One} shows $T^*(S^{2k-1}/G)$ and $T^*((S^{2k-1}/G) \# \Sigma)$ are symplectically inequivalent whenever $|G|\cdot [\Sigma] \not\in bP_{2k}$. 

\begin{Remark} The proof of Theorem \ref{thm:4k-1} shows that $T^*(S^{4k-1} \times S^1)$ and $T^*(\Sigma \times S^1)$ are symplectically distinct when $[\Sigma] \in \Theta_{4k-1} \backslash bP_{4k}$. However, their universal covers $T^*S^{4k-1} \times \bC$ and $T^*\Sigma \times \bC$ are subcritical, hence symplectomorphic (apply the $h$-principle for isotropic submanifolds of submaximal dimension  to the attaching spheres in a handle decomposition; see \cite{CE} for details). Thus, questions concerning discrete group actions on subcritical manifolds go beyond homotopy theory. 
\end{Remark}

\subsection{Constructing Lagrangians in plumbings}

The surgery involved in Lemma \ref{Lem:KraghTrick} can be used to construct a number of superficially surprising Lagrangian embeddings. 

Consider coordinates $x+iy\in\R^{n}\oplus i\R^{n}=\C^{n}$. The \emph{model plumbing} is the open neighbourhood
\begin{equation} \label{eqn:model-region}
\mathcal{U} = \left\{x+iy\in\C^{n}\colon |x|\cdot |y| \leq 1\right\} \subset \bC^n
\end{equation}
of the union of transverse Lagrangian subspaces $\bR^n \cup i \bR^n$.  Given manifolds $M_i$ with base-points $m_i$, pick Riemannian metrics $g_i$ on $M_i$ which are locally flat near $m_i$, and isometric embeddings 
 \[
 \eta_i\colon  (B_1, 0) \hookrightarrow (M_i,m_i)
 \]
 of the unit ball $B_1\subset \bR^n$. There are symplectic embeddings 
\[
D\eta_i\colon \, T^*B_1 \hookrightarrow T^*M_i
\]
which take the unit disk bundle for the flat metric on $B_1$ to the unit disk bundle of $M_i$. On the other hand, the inclusions of the unit ball $\iota_0\colon B_1 \hookrightarrow \bR^n$ and $\iota_1\colon B_1 \hookrightarrow i  \bR^n$ induce symplectic inclusions $D\iota_j : T^*B_1 \hookrightarrow \bC^n$.   The \emph{plumbing} $T^*M_0 \#_{\pl} T^*M_1$ is the Liouville manifold obtained by completing a Weinstein domain obtained from the unit disk bundles $D^*M_i$ by identifying points in the image of $D\eta_0$ with points in the image of $D\eta_1$ whenever their $D\iota_j$-images in $\mathcal{U} \subset \bC^n$ co-incide.   This identifies the two copies of the unit disk bundle $D^*B_1 \cong B_1\times B_1$ via the map $(x,y) \mapsto (y,-x)$, which preserves the symplectic form $\sum_{i=1}^{n}dx_{i}\wedge dy_{i}$.

\begin{Lemma} \label{Lem:Order-2-exists}
Let $\Sigma$ be a homotopy $n$-sphere. If $[\Sigma] \in \Theta_n$ has order two,  there is a Lagrangian embedding of the standard sphere $S^n \hookrightarrow T^*\Sigma \#_{\pl} T^*\Sigma$.  
\end{Lemma}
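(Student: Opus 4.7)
The plan is to apply a Lagrange surgery at the single transverse self-intersection of the union of the two zero sections. Write $Z_0, Z_1 \subset T^*\Sigma \#_{\pl} T^*\Sigma$ for the two zero sections, each a copy of $\Sigma$. By construction of the plumbing, a Darboux neighborhood of the plumbing point is identified with a neighborhood of the origin in $\mathcal{U}\subset \bC^n$ of \eqref{eqn:model-region}, under which $Z_0$ and $Z_1$ correspond to the subspaces $\bR^n$ and $i\bR^n$. These sheets meet transversely only at the origin, so $Z_0 \cup Z_1$ is an immersed Lagrangian in the plumbing with a single transverse double point.

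Next, apply a local Lagrange surgery at that double point, using the same local model that produced $K_\pm$ in Section~2. In Darboux coordinates the operation replaces a neighborhood of the origin in $\bR^n \cup i\bR^n$ by a smooth Lagrangian handle of the form $S^{n-1} \times I$ joining small round spheres in the two sheets, producing a smooth embedded Lagrangian $L \subset T^*\Sigma \#_{\pl} T^*\Sigma$. Topologically the surgery is precisely connect sum: a small disk is removed from each $Z_i$, and the two resulting boundary spheres are joined by a trivial cylinder. Hence $L$ is diffeomorphic to $Z_0 \# Z_1 \iso \Sigma \# \Sigma$, up to a possible orientation reversal of one factor which depends on the sign of the surgery but plays no role below.

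Finally, because $[\Sigma]$ has order two in $\Theta_n$, we have $[\Sigma \# \Sigma] = 2[\Sigma] = 0$ and likewise $[\Sigma \# (-\Sigma)] = 0$, so in either case $L$ is diffeomorphic to the standard sphere $S^n$. The resulting embedding is the desired Lagrangian $S^n \hookrightarrow T^*\Sigma \#_{\pl} T^*\Sigma$. The only point deserving care is the smooth identification of $L$ with the ordinary connect sum; this is standard because the surgery is local, performed in a Darboux chart in which both $Z_i$ appear as flat Euclidean planes and the attached handle is the standard trivial cylinder, so the smooth type of $L$ is determined solely by the classes $[Z_0], [Z_1] \in \Theta_n$.
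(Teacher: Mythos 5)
Your proposal is correct and matches the first of the two arguments the paper gives: Lagrange surgery of the two core $\Sigma$'s at their single transverse intersection point. The paper handles the orientation bookkeeping by first replacing one core by $-\Sigma$ via the symplectomorphism $T^*\Sigma \#_{\pl} T^*\Sigma \cong T^*\Sigma \#_{\pl} T^*(-\Sigma)$ (valid since $[\Sigma]$ has order two) and then observing the surgery is $\Sigma \# (-\Sigma) = S^n$; you instead observe directly that the surgery is $\Sigma\#(\pm\Sigma)$ and use the order-two hypothesis to conclude both outcomes equal $S^n$ --- the same idea, with the hypothesis invoked at a different step.
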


\begin{proof}
Since $\Sigma \cong -\Sigma$ are oriented diffeomorphic, there is a symplectomorphism
\begin{equation} \label{Eqn:ChangeSign}
T^*\Sigma \#_{\pl} T^*\Sigma \ \cong \ T^*\Sigma \#_{\pl} T^*(-\Sigma).
\end{equation}
The Lagrange surgery of the core components, in the second description of \eqref{Eqn:ChangeSign},  is a Lagrangian $\Sigma \# (-\Sigma) = S^n$.  Alternatively, one can 
 begin with $T^*\Sigma \#_{\pl} T^*S^n$ and take the image of the first core factor under the Dehn twist along the second,  to obtain an exact Lagrangian embedding $\iota$ of $\Sigma$ in a homology class which has degree one over the $S^n$-factor.  The result then follows from surgery as in Lemma \ref{Lem:KraghTrick}; after making the embedding $\iota$ graphical over a disk in $S^n$, one reglues over that disk by a diffeomorphism representing $[\Sigma] \in \Theta_n$, to obtain an embedding of $\Sigma \# \Sigma \cong S^n$ into $T^*\Sigma \#_{\pl} T^*\Sigma$.
 \end{proof}

Generalizing the second proof of Lemma \ref{Lem:Order-2-exists} gives the following. For a homotopy $n$-sphere $\Sigma$ we write $\langle \Sigma\rangle \leq \Theta_n$ for the cyclic subgroup of $\Theta_n$ generated by $\Sigma$.

\begin{Lemma}
Let $n>4$ be odd and let $\Sigma$ be a homotopy $n$-sphere. Every homotopy $n$-sphere defining a class in $\langle \Sigma\rangle$ admits a Lagrangian embedding into $T^*\Sigma \#_{\pl} T^*\Sigma$.
\end{Lemma}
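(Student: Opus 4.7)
The plan is to adapt the second construction in the proof of Lemma~\ref{Lem:Order-2-exists} by iterating the Dehn twist. Since $\langle\Sigma\rangle = \{m[\Sigma] : m\in\bZ\}$, it suffices to produce, for each integer $k$, a Lagrangian embedding of $(k+1)\Sigma$ into $T^{\ast}\Sigma \#_{\pl} T^{\ast}\Sigma$; as $k$ ranges over $\bZ$ this exhausts the cyclic subgroup.

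First, I would work in the auxiliary plumbing $T^{\ast}\Sigma \#_{\pl} T^{\ast}S^{n}$, whose core Lagrangian spheres $C_{1}\cong\Sigma$ and $C_{2}\cong S^{n}$ meet transversely at a single point $p_{0}$. Because $C_{2}$ is a \emph{standard} sphere, the generalised Dehn twist $\tau = \tau_{C_{2}}$ is a compactly supported symplectomorphism of the plumbing, and I would set $L_{k}:=\tau^{k}(C_{1})$, an exact Lagrangian embedding of $\Sigma$. Under the plumbing identification, $C_{1}\cap D^{\ast}C_{2}$ is a cotangent fibre of $C_{2}$ at $p_{0}$, and the standard compactly supported model of $\tau$ built from the normalised geodesic flow on $T^{\ast}S^{n}$ wraps this fibre $|k|$ times around $C_{2}$ under iteration. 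Consequently $L_{k}$ is graphical over any small disk $D \subset C_{2}\setminus\{p_{0},-p_{0}\}$ with $|k|$ sheets all of the same orientation (the sign of $k$), and after invoking Lemma~\ref{lma:graph} one may further assume these sheets are locally constant $1$-forms of pairwise distinct norms.

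Next, I would apply the cut-and-paste construction from Lemma~\ref{Lem:KraghTrick}: remove the disk bundle over $D \subset C_{2}$ and reglue by a diffeomorphism $\psi\colon S^{n-1}\to S^{n-1}$ representing $[\Sigma]\in\Theta_{n}$. This replaces the $T^{\ast}S^{n}$ factor with $T^{\ast}\Sigma$, producing the desired ambient symplectic manifold $T^{\ast}\Sigma \#_{\pl} T^{\ast}\Sigma$. A sheet-by-sheet application of Lemma~\ref{lma:hcobordism}, using the distinct-norms condition to separate sheets by Hamiltonian isotopy after regluing, then modifies $L_{k}\cong\Sigma$ into a Lagrangian whose oriented diffeomorphism class is $[\Sigma]+k[\Sigma]=(k+1)[\Sigma]$, each of the $|k|$ uniformly oriented sheets contributing one copy of $\Sigma$ with sign matching the sign of $k$.

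The main obstacle will be the degree and orientation count for $\tau^{k}$ applied to a cotangent fibre, namely that the $|k|$ sheets over $D$ all carry a common orientation; the case $k=1$ is implicit in the second proof of Lemma~\ref{Lem:Order-2-exists}, and for general $k$ it should follow from the explicit description of $\tau$ as the time-one flow of a Hamiltonian of the form $f(|p|)$ with $f'(0)=\pi$, whose $k$-th iterate sweeps the fibre consistently across $|k|$ successive half-periods of the geodesic flow. A secondary technical point is the simultaneous application of Lemma~\ref{lma:hcobordism} to all $|k|$ sheets, which is ensured by their pairwise distinct norms exactly as in the proof of Lemma~\ref{Lem:KraghTrick}.
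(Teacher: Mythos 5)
Your proposal is correct and follows essentially the same route as the paper: pass to $T^*\Sigma \#_{\pl} T^*S^n$, take the image of the core $\Sigma$ under $\tau_{S^n}^k$, and cut out and reglue a disk in the $S^n$-base by a clutching diffeomorphism for $\Sigma$, landing $(k+1)[\Sigma]$ inside $T^*\Sigma \#_{\pl} T^*\Sigma$. One small remark: the claim that the $|k|$ sheets over the disk all carry a common orientation -- which you flag as the main obstacle and justify only heuristically via the wrapping model -- is more than the argument actually needs; the cut-and-paste bookkeeping depends only on the net signed degree over the base, which one can simply read off from the homology class $\tau_*^k[C_1]=[C_1]+k[C_2]\in H_n(X)\cong\Z\oplus\Z$, valid because $n$ is odd so that $[C_2]\cdot[C_2]=0$ and the Dehn twist is an infinite-order transvection; this is where the parity hypothesis enters, and you should invoke it explicitly.
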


\begin{proof}
Let $X=T^*\Sigma \#_{\pl} T^*S^n$. Consider the image $L$ of the first core component $\Sigma$ under the $m$-th power of the Dehn twist along the second core component $S^n$.  Since $n$ is odd, the Dehn twist acts by an infinite order transvection on homology, so this yields an embedded copy of $\Sigma$ in $X$ in the homology class 
\[ 
(1,m)\in H_{n}(X)=H_{n}(\Sigma)\oplus H_{n}(S^{n})\approx \Z\oplus \Z, 
\]
Make this embedding graphical over a disk in $S^n$, with degree $m$ over the base, and then cut out and reglue as in Lemma \ref{Lem:KraghTrick} (strictly, in a modification of that lemma adapted to the case in which the original Lagrangian has higher degree over the base). This yields a copy of 
\[ 
\Sigma \;\#\; \underbrace{\Sigma \# \dots \# \Sigma}_{m}
\]
in the plumbing $T^*\Sigma \#_{\pl} T^*\Sigma$.
\end{proof}

It is interesting to contrast  Lemma \ref{Lem:Order-2-exists} with the following.
Let $L, L' \subset X$ be closed exact Lagrangian submanifolds of a convex exact symplectic manifold $X$.  Motivated by the analogy between Morse and Floer theory, Fukaya formulated in \cite{Fukaya:s-cob} the \emph{symplectic $s$-cobordism conjecture}, asserting  that when Floer cohomology $HF^*(L,L')$  and its Whitehead torsion both vanish, there should be a Hamiltonian symplectomorphism $\phi\colon X \rightarrow X$ with $\phi(L) \cap L' = \varnothing$, i.e.~displacing $L$ from $L'$.  

\begin{Lemma} \label{lem:s-cobordism}
Let $n>4$ be odd and let $\Sigma$ be a homotopy $n$-sphere with $[\Sigma] \not\in bP_{n+1}$. 
The symplectic s-cobordism conjecture implies that there is no Lagrangian embedding $\Sigma \hookrightarrow T^*S^n \#_{\pl} T^*S^n$.
\end{Lemma}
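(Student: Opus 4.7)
The plan is to argue by contradiction: assume an exact Lagrangian embedding $\iota \colon \Sigma \hookrightarrow X := T^*S^n \#_{\pl} T^*S^n$ with $[\Sigma] \notin bP_{n+1}$, use the symplectic $s$-cobordism conjecture to Hamiltonian-displace $\Sigma$ from a suitable Lagrangian sphere $L' \subset X$, exhibit the displaced $\Sigma$ as an exact Lagrangian in a Weinstein subdomain of $X$ that is Weinstein-equivalent to $T^*S^n$, and invoke \cite{Abouzaid} (for $n = 4k+1$) or Theorem \ref{thm:4k-1} (for $n = 4k-1$) to force $[\Sigma] \in bP_{n+1}$, a contradiction. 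Write $L_0, L_1 \subset X$ for the two zero sections.

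The geometric ingredient is that $X$ admits a Weinstein handle decomposition with one $0$-handle and two critical $n$-handles whose cores assemble to $L_0$ and $L_1$; removing the core $L_1$ allows the Liouville flow to retract the corresponding handle onto its attaching isotropic sphere, so that the complement $X \setminus L_1$ is Weinstein-deformation equivalent to $T^*L_0 = T^*S^n$, and the analogous statement holds for $X \setminus \phi(L_1)$ for any ambient symplectomorphism $\phi$ of $X$. The Whitehead torsion obstruction in the $s$-cobordism conjecture lies in $\mathrm{Wh}(\pi_1(\Sigma)) \oplus \mathrm{Wh}(\pi_1(L_1)) = 0$ since both $\Sigma$ and $L_1 \cong S^n$ are simply connected ($n > 4$), so the sole remaining ingredient is the vanishing of the Floer cohomology $HF^*(\Sigma, L')$ for an appropriately chosen $L'$.

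To produce such an $L'$, note that by \cite{AbSm} the object $\Sigma$ represents a spherical object in the derived Fukaya category $D\mathcal{F}(X)$, which is generated by $L_0, L_1$ with the structure of an $A_2$-type category of spherical objects. Its set of spherical objects forms the orbit of $\{L_0, L_1\}$ under the braid group generated by the Dehn twists $\tau_{L_0}, \tau_{L_1}$, each realized by an ambient symplectomorphism of $X$. The goal is to find a braid group element $g$ with $HF^*(\Sigma, g(L_1)) = 0$; the Seidel exact triangle for spherical twists governs how Floer cohomology changes along this orbit, reducing part of the computation to a combinatorial question on the Grothendieck group $K_0(D\mathcal{F}(X)) \cong \Z^2$, which for $n$ odd carries a non-degenerate skew-symmetric intersection pairing. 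The main obstacle is making this step genuinely rigorous: one must use the $A_\infty$-structure finely enough to upgrade vanishing of the Euler characteristic $\chi(HF^*(\Sigma, g(L_1)))$ to vanishing of the full Floer complex, and verify that some $g$ in the braid group genuinely produces the required disjunction when fed into the $s$-cobordism conjecture.
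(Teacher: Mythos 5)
Your high-level strategy---argue by contradiction, Hamiltonian-displace $\Sigma$ from a Lagrangian sphere $L'$, and show the displaced $\Sigma$ then lives in a Weinstein subdomain symplectomorphic to $T^*S^n$---is aligned with the paper's. The crucial gap is in the step you flag as ``the main obstacle'': there is in fact \emph{no} closed Lagrangian $L' \subset X := T^*S^n \#_{\pl} T^*S^n$ with $HF^*(\Sigma,L')=0$ for you to feed into the $s$-cobordism conjecture, so the plan cannot be completed inside $X$. Here is why. By \cite{AbSm}, after applying a braid element you may assume $\Sigma \cong L_a$ in $\scrF(X)$, where $L_a,L_b$ are the two core spheres. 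As you observe, $K_0(D\scrF(X)) \cong \Z^2$ carries, for $n$ odd, the non-degenerate skew form $\bigl(\begin{smallmatrix} 0 & 1 \\ -1 & 0\end{smallmatrix}\bigr)$; vanishing of $\chi(HF^*(\Sigma,L'))$ therefore forces $[L'] = \pm[L_a]$. But the spherical objects in the $Br_3$-orbit realizing the class $\pm[L_a]$ are, up to shift, just $L_a$ itself, and then $HF^*(\Sigma,L') \cong HF^*(L_a,L_a) \cong H^*(S^n) \neq 0$. So the Euler-characteristic computation does not ``upgrade'' to vanishing of $HF$---it rules it out. There is no combinatorial miracle in the $A_\infty$-structure waiting to save the argument.

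The paper circumvents this in a way your proposal does not anticipate. The Lagrangian with $HF^*(\Sigma,\cdot)=0$ that one actually wants is the cotangent fiber $T_x^*L_b$ of the \emph{other} core sphere (disjoint from $L_a$, hence Floer-trivial against $\Sigma \cong L_a$), but that is non-compact and the $s$-cobordism conjecture concerns closed Lagrangians. The fix is to enlarge $X$ to the $A_3^n$ Milnor fiber $M$ by attaching a Weinstein $n$-handle $(H,H_0)$ along the Legendrian unknot $\partial T_x^*L_b$; in $M$ the union $L' := T_x^*L_b \cup H_0 \cong S^n$ is closed, $HF^*(\Sigma,L') \cong HF^*(\Sigma,T_x^*L_b) = 0$, and the Whitehead torsion vanishes as you noted. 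The paper also needs an intermediate geometric step you do not have: if $\Sigma$ were disjoint from \emph{some} fiber $T_x^*L_b$, it would already sit inside $T^*L_a \#_{\pl} T^*\R^n \cong T^*S^n$, contradicting Theorem~\ref{Thm:One}; hence $\Sigma$ meets every such fiber, which is the geometric assertion contradicted by the $s$-cobordism displacement. In short: you need to leave $X$ and pass to the $A_3$ Milnor fiber, and the object $L'$ is not in the braid orbit of $L_0,L_1$ but is built from a cotangent fiber.
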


\begin{proof}
The plumbing $T^{\ast}S^{n}\#_{\pl} T^{\ast}S^{n}$ is symplectomorphic to the complex $n$-dimensional $A_{2}^{n}$-Milnor fiber, i.e.~the affine hypersurface $X\subset\C^{n+1}$ with complex coordinates $(z_1,\dots, z_n,t)$ given by the equation
\[ 
\sum_{j=1}^n z_j^2 + t^3 = 0.
\]
The space $X$ retracts to a chain of two Lagrangian $n$-spheres $L_a$ and $L_b$ that intersects transversely in a point $\{p\}$ (these correspond to the two zero-sections in the plumbing). The compactly supported symplectomorphisms of $X$ that are obtained from Dehn twists in $L_a$ and $L_b$ give a copy of the braid group $Br_{3}$ on three strands inside the group of symplectic isotopy classes of such symplectormorphisms, $\pi_0(\mathrm{Symp}_{\mathrm{ct}}(X))$.

Assume that there exists a Lagrangian embedding of a homotopy $n$-sphere $\Sigma$ into $X$.
By applying symplectomorphisms in $Br_{3}$ one can assume that $\Sigma$ is homologous, and even isomorphic in the Fukaya category $\scrF(T^{\ast}S^{n}\#_{\pl} T^{\ast}S^{n})$, to the first sphere $L_a$ in the chain, see \cite{AbSm}.  

Then, if there is some point $x\in L_b\backslash \{p\}$ such that $\Sigma\,\cap \,T_{x}^{\ast} L_{b}=\varnothing$, where $T_x^*L_b \subset X$ is the cotangent fiber of $L_{b}$ at $x$, then in fact $\Sigma$ lies inside an open subset of $X$ of the form $T^*L_a \#_{\pl} T^*\bR^n$, where $\bR^n = L_b \backslash \{x\}$. This subset is just the boundary connect sum of $T^{\ast}L$ with a ball. Boundary connect sum with a ball does not change symplectomorphism type and we would find a Lagrangian embedding $\Sigma \hookrightarrow T^*L_a = T^*S^n$. That contradicts (the proof of) Theorem \ref{Thm:One} whenever $[\Sigma] \not\in bP_{n+1}$ and proves that in fact $\Sigma\cap T_{x}^{\ast} L_{b}\ne\varnothing$ for every $x\in L_{b}$.  

We next note that $HF(\Sigma, T_x^*L_b) = 0$ since $\Sigma$ is isomorphic in the Fukaya category to $L_a$. Since we are dealing with simply-connected Lagrangians in a simply-connected exact manifold, the Floer complex is defined over $\bZ$; the Whitehead group $Wh(\{1\}) = 0$, and so the Floer complex furthermore has trivial Whitehead torsion.
Finally, view the $A_3^{n}$-Milnor fiber $M$ as obtained from the $A_2^{n}$-Milnor fiber $X$ above by attaching an $n$-dimensional handle $(H,H_0)$, where $H_0$ denotes the Lagrangian core disk, to the Legendrian unknot which is the boundary of the cotangent fiber $T_x^*L_b$.  The space $M$ then contains the closed Lagrangian submanifolds $L = \Sigma$ and $L' = T_{x}^{\ast}L_b\cup H_0 \cong S^n$ (the newly created third core component). Then $HF^{\ast}(L,L')\approx HF(\Sigma, T_x^*L_b) = 0$ and the Whitehead torsion still vanishes. Thus $L$ and $L'$ would violate the hypotheses of the $s$-cobordism conjecture, and hence the original embedding of $\Sigma$ could not exist.   
\end{proof}

\subsection{Constraining Lagrangians in boundary sums} 
In place of the plumbing $T^*\Sigma \#_{\pl} T^*\Sigma'$ of two cotangent bundles, one can also consider the boundary connect sum $M = T^*\Sigma \#_{\partial} T^*\Sigma'$, obtained by joining the Stein manifolds by a (subcritical) Stein one-handle (applied to the closed unit disk cotangent bundles, this precisely takes connect sum at the contact boundary).  Every closed exact spin Lagrangian in $M$ defines an object in the compact Fukaya category $\scrF(M)$, in particular there are two distinguished objects which are just the Lagrangians $\Sigma, \Sigma'$; the manifold $M$ retracts to an isotropic skeleton obtained by joining these two core Lagrangians by an arc.

\begin{Lemma} \label{Lem:All_cores}
Every closed exact graded spin Lagrangian in $M$ is isomorphic to a shift of one of the core components  $\Sigma$ or $\Sigma'$ in $\scrF(M)$.
\end{Lemma}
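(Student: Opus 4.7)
The plan is to reduce the assertion, via a Hamiltonian isotopy, to the already-known nearby-Lagrangian theorem in a single cotangent bundle of a homotopy sphere. The manifold $M = T^*\Sigma \#_{\partial} T^*\Sigma'$ is built from the disjoint union $T^*\Sigma \sqcup T^*\Sigma'$ by attaching a subcritical Weinstein $1$-handle $H$ whose isotropic core arc $\gamma$ joins a base point of $\Sigma$ to one of $\Sigma'$. The two key geometric inputs are the explicit Liouville model of $H$ (whose Liouville flow retracts $H$ onto $\gamma$) and Gromov's theorem, which forbids closed exact Lagrangian submanifolds inside the completion of $H$.

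First I would show that every closed exact graded spin Lagrangian $L \subset M$, which may be assumed connected, is Hamiltonian isotopic to one contained entirely in $T^*\Sigma$ or $T^*\Sigma'$. Fix a cocore slice of $H$, and build a Hamiltonian supported in a neighborhood of $H$ modelled on a cutoff of the Liouville vector field of $H$, so that its flow squeezes $L \cap H$ toward the ends of the handle. Because no closed exact Lagrangian can become trapped inside the completion of the subcritical piece $H$, a compactness/limiting argument shows that after finite time the image of $L$ is disjoint from the chosen cocore slice. That slice separates $M$ into two connected components, each symplectomorphic to a completion of $T^*\Sigma$ or of $T^*\Sigma'$, and by connectedness $L$ lies in exactly one of them, say $T^*\Sigma$.

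Second, I would invoke the nearby-Lagrangian results of \cite{FSS, Nadler, Abouzaid:htpy}: every closed exact Lagrangian in $T^*\Sigma$ is quasi-isomorphic, in $\scrF(T^*\Sigma)$, to the zero section with some grading shift. Third, the inclusion $T^*\Sigma \hookrightarrow M$ is a Liouville embedding, and by the integrated maximum principle (using almost-complex structures of contact type along $\partial T^*\Sigma$), the Floer cohomology of compact Lagrangians inside $T^*\Sigma$ is the same computed in $\scrF(T^*\Sigma)$ or in $\scrF(M)$. The induced functor is therefore fully faithful on this subcategory and preserves the quasi-isomorphism, yielding $L \cong \Sigma[k]$ in $\scrF(M)$ for some integer $k$.

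The principal difficulty is the first step. A priori $L$ may have many sheets threading through $H$, and the displacing Hamiltonian must push all of them across the cocore slice simultaneously, without creating self-intersections or escaping a priori bounds along the way. The delicate point is to formalize the ``contraction towards the core'' so that subcriticality of $H$ (through the Gromov-type obstruction) actually forces termination at a Lagrangian disjoint from the cocore slice. Once that technical point is in place, the remaining steps reduce to already-established facts about Fukaya categories of cotangent bundles and Viterbo-type functoriality for Liouville subdomains.
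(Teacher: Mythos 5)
Your approach is genuinely different from the paper's, but the first step---the crucial one---has a real gap. The paper never attempts a geometric displacement of $L$ into one of the two cotangent bundles. Instead, following \cite{AbSm}, it embeds $M$ as a Liouville subdomain of an affine variety $E$ with a Lefschetz fibration (obtained by complexifying a Morse function on $\Sigma \amalg \Sigma'$), uses Seidel's thimble-generation theorem plus the acceleration functor of \cite{AbouzaidSeidel} to place $\scrF(M)$ inside the subcategory of $\scrW(E)$ generated by thimbles, and then applies the Viterbo restriction functor $\scrW(E)\to\scrW(M)$ to conclude that $\scrF(M)$ is generated by the two cotangent fibres. The degree constraints on the wrapped complexes then force the compact category to be generated by the two cores, whose modules are Floer-disjoint, and irreducibility singles out shifts of $\Sigma$ and $\Sigma'$. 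Your steps two and three (the nearby Lagrangian theorem inside a single cotangent bundle, and full faithfulness of the restriction via the integrated maximum principle) are reasonable and would complete the argument \emph{if} step one held.

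However, step one does not follow from what you have written, and I do not believe it is a known result. You claim that a cutoff of the Liouville flow of the handle $H$, together with the Gromov-type obstruction to closed exact Lagrangians in subcritical Weinstein manifolds, forces $L$ off the cocore slice after finite time. There are two problems. First, the obstruction you cite concerns \emph{closed} exact Lagrangians in a subcritical manifold, whereas $L \cap H$ is a Lagrangian \emph{with boundary} in $\partial H$; the Gromov argument has no bite on such pieces. Second, and more fundamentally, the Liouville flow of $H$ contracts towards the core arc $\gamma$ (or expands towards the full boundary of $H$, including the contact part of $\partial M$), not towards the "ends" of the handle in a way that sweeps past the cocore. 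A priori $L$ could enter and leave $H$ many times, with sheets passing between $T^*\Sigma$ and $T^*\Sigma'$ in both directions, and in homology $[L]$ could lie in any class $a[\Sigma]+b[\Sigma']$. Ruling out, say, $[L] = [\Sigma]+[\Sigma']$---which would make your displacement impossible for topological reasons---is precisely what the Fukaya-categorical argument accomplishes and is essentially equivalent to the lemma you are trying to prove. A termination argument for your squeezing flow would therefore need to input this information somehow, and you have not supplied it. Until step one is established (and a correct proof would probably have to route through the Floer theory anyway, as the paper does), the proposal does not constitute a proof.
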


\begin{proof}
This follows from the same (somewhat roundabout) argument used to study the Fukaya category of the plumbing in \cite{AbSm}.  More precisely, as in \cite{FSS}, by complexifying a Morse function on $\Sigma \amalg \Sigma'$ one obtains an embedding $M \hookrightarrow E$ of a relatively compact open neighborhood of the skeleton of $M$ as a Liouville subdomain of an affine variety $E$ which is equipped with a Lefschetz fibration $p\colon E \rightarrow \bC$.  Combining Seidel's theorem that the Lefschetz thimbles generate the Fukaya category of the Lefschetz fibration \cite{Seidel:FCPLT}, and the acceleration functor of \cite{AbouzaidSeidel}, one sees \cite[Proposition 3.6]{AbSm} that $\scrF(M)$ lies inside the subcategory of the wrapped Fukaya category $\scrW(E)$ generated by the Lefschetz thimbles.  Now apply the Viterbo-type restriction functor of \cite{AbouzaidSeidel} from $\scrW(E)$ to $\scrW(M)$; each Lefschetz thimble is either disjoint from $M$ or co-incides with a disk cotangent fibre inside $M$, which implies that $\scrF(M)$ is generated by the two cotangent fibres.   

Finally, since the relevant wrapped Floer complexes are graded in non-positive degrees, the category of compact Lagrangians $\scrF(M)$ is generated by the core components, i.e.~the simple modules over the cotangent fibers, compare to \cite[Lemmas 4.6 and 4.7]{AbSm}.   Since these two simple modules are Floer-theoretically disjoint, the only twisted complexes which are irreducible (have self-Floer-cohomology of rank $1$ in degree $0$) are the core components themselves, up to shift.
\end{proof}

\begin{Remark} There are at least two other routes to proving the previous result, which do not invoke the auxiliary Lefschetz fibration $E$ and which may give at least as much intuition as the actual proof above. 
 \cite{Abouzaid:cotangent_fibre_generates} shows that the open-closed map 
 \[  
HH_*(HW^*(T_x^*,T_x^*)) \rightarrow SH^*(T^*S^n),
\]
hits the unit, implying that the wrapped category of the cotangent bundle is split-generated by by the cotangent fiber by the criterion of \cite{Abouzaid:generation}. For boundary sums, there is an isomorphism 
\begin{equation} \label{Eqn:Split}
SH^*(M_1 \#_{\partial} M_2) \cong SH^*(M_1) \oplus SH^*(M_2)
\end{equation}
(see \cite{Cieliebak:chord, McLean} for the isomorphism of \eqref{Eqn:Split} as vector spaces respectively rings). 
If the open-closed map was compatible with the restriction functor, setting $\scrW_0(M) \subset \scrW(M)$ for the subcategory generated by the two cotangent fibres,  one could immediately conclude that the map 
\begin{equation} \label{Eq:boundary_sum_hit_unit}
HH_*(\scrW_0(M)) \rightarrow SH^*(M)
\end{equation}
hits the unit, implying $\scrW_0(M)$ split-generates $\scrW(M)$ and in particular $\scrF(M)$.  However, compatibility of the open-closed map and the restriction functor does not yet appear in the literature.  

In another direction, one can verify that \eqref{Eq:boundary_sum_hit_unit} hits the unit using the technology of \cite{BEE}.  For consistency with \cite{BEE}, we shift grading so the unit in $SH^*$ now lies in degree $n$.  We can represent $M$ as the result of attaching Lagrangian handles to two distant standard Legendrian spheres $\Lambda_1$ and $\Lambda_{2}$ in the boundary of the ball (whence there are effectively no Reeb chord connecting $\Lambda_{1}$ to $\Lambda_{2}$, meaning none up to any desired value of action). We represent that ball as an ellipsoid, see \cite[Section 7.1]{BEE} which means that it has effectively only one Reeb orbit $\gamma$ of Conley-Zehnder index $|\gamma|=n+1$. Continuing to follow \cite[Section 7.1]{BEE}, the Reeb orbits after surgery of Conley-Zehnder index $<2n-2$ correspond to the one-letter words of Reeb chords $a_1$ and $a_2$, and both have Conley-Zehnder index $n-1$. Denote these orbits $\{a_j\}$. We next consider the symplectic cohomology complex corresponding to a small perturbation of a time independent Hamiltonian with one minimum $m$ and two critical points $p_{\pm}$ of index $n$, one in each handle.  Considering the Morsification of the critical manifold, a disjoint union of circles, parameterizing a choice of start-point on the Reeb orbits, one sees each Reeb orbit $\gamma$ gives rise to two generators $\check\gamma$ and $\hat{\gamma}$ for the $SC^*$-complex. We thus have the following subcomplex near the degree $n$ of the unit:
\begin{itemize}
\item In degree $n$ we have the generators $m$, $\{\hat a_1\}$, and $\{\hat a_2\}$ and the differential acts trivially. (It is clear the differential is trivial on $m$ and on $\{\hat a_j\}$ it is simply the Morse differential of the circle, which vanishes; see \cite{BourgeoisOancea}.)
\item In degree $n+1$ there is the generator $\check{\gamma}$ and the differential of $\check{\gamma}$ is $m-\{\hat a_1\}-\{\hat a_2\}$. To see this, note that \cite{BEE} gives an isomorphism $\Phi\colon SH^*(M)\to LH^{\mathrm{Ho}}(\Lambda_1\cup\Lambda_2)$. Here the differential on $LH^{\mathrm{Ho}}(\Lambda_1\cup\Lambda_2)$ is trivial, and in degree $n$ the complex is generated by $\hat a_j$, $j=1,2$. Further \cite[Section 5]{BEEproduct} shows that $\hat a_1+\hat a_2$ is the image of the unit under the map $\Phi$ and as we shall see below the preimage of $\hat a_j$ under $\Phi$ is the orbit $\{\hat a_{j}\}$. On the other hand $m$ also represents the unit because the natural map $H^*(M) \rightarrow SH^{*+n}(M)$ is unital. Thus the two cycles must be homologous.
\end{itemize} 
We consider the map $\Phi$ in more detail. The cotangent fibers appear in the handle model of $M$ as the co-core spheres $C_1$ and $C_2$, with Legendrian boundary spheres $\Gamma_1$ and $\Gamma_2$, for the handles attached to $\Lambda_1$ and $\Lambda_2$, respectively. It follows from \cite[Section 6.1]{BEE} that the Reeb chords of $\Gamma_j$ are in natural one to one correspondence with words of Reeb chords of $\Lambda_j$. As explained in \cite[Section 6.3]{BEE}, the isomorphism $\Phi$ is obtained from moduli spaces of strips with two Floer corners at the intersection of the core and co-core disks.  One considers strips which connect the chord on the co-core sphere corresponding to the chord $a_j$, to $a_j$ itself. Self-gluing of such a strip gives a 1-dimensional moduli space of holomorphic annuli. For action reasons the only possible splitting at the other end of the moduli space is a two-story building, with upper level a disk with positive boundary puncture at the chord of $\Gamma_j$ corresponding to $a_j$ and negative interior puncture at $\{a_j\}$, and with lower level having a  positive puncture at $\{a_j\}$ and negative puncture at $a_j$. The upper disk contributes to the open-closed string map, whilst the lower disk defines the surgery isomorphism map. It follows that $[\hat a_1]+[\hat a_2]$, which represents the unit, is indeed in the image of the open-closed map.  

Finally, in the spirit of the previous paragraph, we note that there is a still more direct proof (again not yet completely in the literature) which involves an ``upside down" surgery map. For this, one uses the approach in \cite{BEEproduct} to deal with holomorphic curves with several positive punctures, and directly defines an isomorphism from a mapping cone 
\[
\mathrm{Cone}\left\{ HH_*(\scrW_0(M)) \rightarrow SH^*(M) \right\} \longrightarrow SH^*(B^{2n})
\]
of the open-closed map (from Hochschild homology of the wrapped homologies of the co-core spheres to the symplectic cohomology after surgery)  to the (acyclic) symplectic homology of the subcritical manifold before surgery. This again establishes Abouzaid's generation criterion. 
\end{Remark}

\begin{Lemma}
Let $n>4$ be odd and let $\Sigma$ be a homotopy $n$-sphere. If $[\Sigma] \not\in bP_{n+1}$ then the boundary connect sums 
\[
T^*S^n \#_{\partial} T^*S^n, \ T^*\Sigma \#_{\partial} T^*S^n \text{ and } \ T^*\Sigma \#_{\partial} T^*\Sigma
\]
are pairwise non-symplectomorphic. 
\end{Lemma}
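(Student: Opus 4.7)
The strategy is to reduce a putative symplectomorphism between two of the listed boundary sums to a Lagrangian embedding of a homotopy sphere into a single cotangent bundle, and then invoke Theorem \ref{Thm:One} (via Lemma \ref{Lem:KraghTrick} and Abouzaid's theorem / Theorem \ref{thm:4k-1}) to constrain the diffeomorphism type of the cores modulo $bP_{n+1}$.

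Let $\phi\colon T^*A \#_\partial T^*B \to T^*A' \#_\partial T^*B'$ be a symplectomorphism, where the two pairs of homotopy $n$-spheres $(A,B)$ and $(A',B')$ range over the possibilities in the statement. By Lemma \ref{Lem:All_cores}, each of the exact Lagrangians $\phi(A)$ and $\phi(B)$ is isomorphic in $\scrF(T^*A' \#_\partial T^*B')$ to a shift of one of the core components. Since $\phi(A)$ and $\phi(B)$ are geometrically disjoint while any core has nontrivial self-Floer cohomology, the images must correspond to distinct cores, and after relabeling we may assume $\phi(A)\simeq A'$ and $\phi(B)\simeq B'$ in the Fukaya category. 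Following the template of Lemma \ref{lem:s-cobordism}, the next task is to displace $\phi(A)$ from some cotangent fiber $T_x^*B'$ at a point $x$ in $B'$ away from the handle region; once this is achieved, $\phi(A)$ lies in the complement $(T^*A' \#_\partial T^*B') \setminus T_x^*B'$, which deformation retracts to $T^*A' \#_\partial T^*\bR^n \cong T^*A'$. The result is a Lagrangian embedding of the homotopy sphere $\phi(A) \cong A$ into $T^*A'$, and the analogous argument yields $B \hookrightarrow T^*B'$. Applying Lemma \ref{Lem:KraghTrick} combined with Abouzaid's theorem when $n = 4k+1$ or Theorem \ref{thm:4k-1} when $n = 4k-1$ then gives $[A] \equiv \pm [A'] \pmod{bP_{n+1}}$ and $[B] \equiv \pm[B'] \pmod{bP_{n+1}}$.

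Therefore the unordered pair $\{[A], [B]\}$, taken modulo sign on each entry in the quotient $\Theta_n / bP_{n+1}$, is a symplectic invariant of the boundary sum. For the three manifolds in the statement this invariant reads $\{0,0\}$, $\{0, \pm[\Sigma]\}$, and $\{\pm[\Sigma], \pm[\Sigma]\}$, respectively; the hypothesis $[\Sigma]\not\in bP_{n+1}$ ensures $\pm[\Sigma]\neq 0$ in the quotient, so these three multisets are pairwise distinct. The main obstacle is the displaceability step: justifying that $\phi(A)$ can be Hamiltonian isotoped off of some cotangent fiber of $B'$ without appealing to the symplectic $s$-cobordism conjecture that is invoked in Lemma \ref{lem:s-cobordism}. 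The key asymmetry with the plumbing setting is that the connecting 1-handle in a boundary sum is subcritical and the two cores $A'$ and $B'$ are geometrically disjoint; this should allow one to slide $\phi(A)$ across the handle and off the non-compact fiber $T_x^*B'$ using compactly supported Hamiltonian isotopies together with the vanishing of $HW^*(\phi(A), T_x^*B') = HW^*(A', T_x^*B') = 0$, but the details of this construction require care.
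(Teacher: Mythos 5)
Your proposal has a genuine gap at exactly the place you flag, and it is not one that ``requires care'' so much as one that is currently out of reach. The displacement step --- Hamiltonian isotoping $\phi(A)$ off a cotangent fiber $T_x^*B'$ on the strength of $HW^*(\phi(A),T_x^*B')=0$ --- is not a consequence of Floer theory; it is precisely an instance of Fukaya's symplectic $s$-cobordism conjecture, which the paper explicitly treats as an open hypothesis in Lemma~\ref{lem:s-cobordism}. The logical obstruction is identical whether the ambient is a plumbing or a boundary sum: Floer-theoretic vanishing does not give geometric displaceability, and the subcriticality of the connecting $1$-handle does not help, since $\phi(A)$ may still pass through the $B'$-region in an uncontrolled way while remaining Fukaya-isomorphic to $A'$. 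As written, your argument would prove the lemma only conditionally on the $s$-cobordism conjecture.

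The paper avoids displacement altogether. Its invariant is the set of homology classes in $T^*A\#_{\partial}T^*B$ that support an embedded Lagrangian diffeomorphic to $\Sigma$. It first shows this set is empty for $T^*S^n\#_{\partial}T^*S^n$ by embedding the \emph{entire} boundary sum into $\bC^{2k+1}\times\bC\bP^{2k}$ (two disjoint Hopf-graph spheres joined by an arc; for other odd $n$, the Ekholm--Smith analogue with two disjoint $S^1\times S^{4k-1}$ joined by an isotropic annulus), using Lemma~\ref{Lem:All_cores} only to pin down the homology and Maslov class of a putative Lagrangian $\Sigma$, and then running the Abouzaid/Ekholm--Smith filling construction directly inside the product. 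No Lagrangian needs to be pushed off a fiber. The other two boundary sums are then reduced to this one by the cut-and-paste move of Lemma~\ref{Lem:KraghTrick}, applied over a disk in a $\Sigma$-core over which $L$ has degree zero. If you want to salvage your route, you should either make the $s$-cobordism conjecture an explicit hypothesis, or adopt the paper's embed-into-a-product strategy, which sidesteps displaceability entirely.
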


\begin{proof}
We claim the manifolds are distinguished by the set of homology classes which can be represented by an embedded Lagrangian submanifold diffeomorphic to $\Sigma$. Consider for definiteness the first space $M = T^*S^n \#_{\partial} T^*S^n$.  This is the completion of a Stein neighbourhood of the isotropic complex obtained from joining two disjoint Lagrangian $n$-spheres by an arc.   Abouzaid's proof of the non-existence of homotopy spheres in $bP_{4k+2}$ as embedded Lagrangians in $T^*S^{4k+1}$ begins with the existence of a Lagrangian embedding of $S^{4k+1}$ into $\bC^{2k+1}\times \bC\bP^{2k}$, via the graph of the Hopf map. Taking a pair of disjoint such spheres (via translation in the $\bC^{2k+1}$-factor), if $\Sigma$ with $[\Sigma] \in \Theta_{4k+1}\backslash bP_{4k+2}$ admits any Lagrangian embedding into $M$, it admits a Lagrangian embedding into $\bC^{2k+1}\times \bC\bP^{2k}$.  The Maslov class of that embedding is equal to that of either copy of the Hopf graph $S^{4k+1}$, since by Lemma \ref{Lem:All_cores} $\Sigma$ is necessarily homologous to one of the core components. This is enough to run Abouzaid's argument and obtain a contradiction.  For other odd values of $n$ one analogously adapts the arguments of \cite{EkholmSmith}, joining two disjoint Lagrangian embeddings of $S^{1}\times S^{4k-1}$ by an isotropic annulus corresponding to connecting by arcs fiberwise over $S^1$.

Now suppose there is an embedding of $\Sigma\cong L$ into $T^*\Sigma \#_{\partial} T^*S^n$ in the homology class of the second component.  By a version of Lemma \ref{Lem:KraghTrick}, we can make $L$ graphical over a disk in the core component $\Sigma$ (over which $L$ has degree zero by hypothesis), and cut and paste  to obtain an embedding of $\Sigma$ into $T^*S^n \#_{\partial} T^*S^n$, violating the previous paragraph. The cut-and-paste argument similarly reduces the boundary sum of two cotangent bundles of exotic spheres to the corresponding boundary sum of cotangent bundles of standard spheres. The result follows.
\end{proof}

\subsection{Unit cotangent bundles}
The starting point for this paper is the question of how the symplectic geometry of the cotangent bundle of a smooth manifold depends on the smooth structure of the manifold. An analogous contact geometric question is how the contact geometry of the unit cotangent bundle of a manifold depends on the smooth structure. We are not quite able to prove the counterparts of the symplectic theorems above for exotic spheres. Previous results however do give some information that we discuss next. 

Let $M$ be a smooth manifold and let $U^{\ast}M$ denote its spherical cotangent bundle with its standard contact structure given by restriction of the Liouville form $p\,dq$. Note that the fiber $F\subset U^{\ast}M$ is a Legendrian sphere. 

\begin{Theorem}\label{t:unit+fiber}
Let $M_0$ and $M_1$ be smooth manifolds.
If $M_0$ does not admit an exact Lagrangian embedding into $T^*M_1$ then the pairs $(U^{\ast}M_{0},F)$ and $(U^{\ast} M_1,F)$ are not contactomorphic.
\end{Theorem}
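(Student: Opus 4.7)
The plan is to establish the contrapositive: given a contactomorphism $\phi\colon (U^{\ast}M_0, F)\to(U^{\ast}M_1, F)$ taking the Legendrian fiber at some $q_0\in M_0$ to the Legendrian fiber at some $q_1\in M_1$, I would construct an exact Lagrangian embedding $M_0 \hookrightarrow T^{\ast}M_1$. First I would normalize $\phi$ near $F$ using the Legendrian Weinstein neighborhood theorem: since the germ of a standard Legendrian sphere $F$ in $U^{\ast}M_i$ depends only on $F$ and not on $M_i$, a contact isotopy can be used to make $\phi$ agree with a fixed standard model on a contact neighborhood of $F$. Consequently, $\phi$ extends to a symplectomorphism between symplectic neighborhoods of the cotangent-fiber disks $D^{\ast}_{q_0}M_0 \subset D^{\ast}M_0$ and $D^{\ast}_{q_1}M_1 \subset D^{\ast}M_1$, identifying the two disks.

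Next I would carry out the main cut-and-paste. Form the Weinstein manifold
\[
X \;=\; D^{\ast}M_0 \ \cup_{\phi}\ \bigl(T^{\ast}M_1 \setminus \mathrm{int}(D^{\ast}M_1)\bigr),
\]
by gluing $D^{\ast}M_0$ to the cylindrical end of $T^{\ast}M_1$ along the contactomorphism of contact boundaries. Since the Liouville vector fields are outward-pointing on both sides of the gluing hypersurface, the Liouville structure extends to $X$, and the resulting Weinstein manifold has compact skeleton $M_0$ (the zero section of $D^{\ast}M_0$). The fiber-preserving normalization ensures that the Lagrangian disk $D^{\ast}_{q_0}M_0 \subset D^{\ast}M_0$ extends across the gluing through the half-cone $F\times [1,\infty) \subset T^{\ast}M_1 \setminus \mathrm{int}(D^{\ast}M_1)$, forming a complete exact Lagrangian plane $L\subset X$ that meets $M_0$ transversely at $q_0$ and agrees near infinity with the asymptotic form of the cotangent fiber $T^{\ast}_{q_1}M_1 \subset T^{\ast}M_1$.

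The final step would be to identify $X$ with $T^{\ast}M_1$ by a symplectomorphism sending $L$ to $T^{\ast}_{q_1}M_1$; the image of $M_0 \subset X$ then gives the required exact Lagrangian in $T^{\ast}M_1$. Both manifolds share the ideal contact boundary $U^{\ast}M_1$ and each contains a standard Weinstein neighborhood of a Lagrangian plane asymptotic to $F$, so a relative Weinstein uniqueness statement in the spirit of Cieliebak--Eliashberg should provide the identification. The main obstacle will be precisely this last step: a priori the skeleta $M_0$ and $M_1$ need not be diffeomorphic, so one cannot directly assert a global Weinstein equivalence $X \cong T^{\ast}M_1$. The correct approach is likely to avoid a global identification and instead perform a localized Lagrangian-disk surgery in $T^{\ast}M_1$ along $T^{\ast}_{q_1}M_1$, replacing a Weinstein tubular neighborhood of the cotangent fiber by the corresponding piece of $X$ via $\phi$ and directly producing the exact Lagrangian embedding. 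This would parallel the surgery used in Lemma \ref{Lem:KraghTrick} elsewhere in this note.
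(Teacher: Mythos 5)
Your approach is genuinely different from the paper's, and it has a gap that you correctly flag but do not close. The gluing $X = D^{\ast}M_0 \cup_{\phi} (T^{\ast}M_1 \setminus \mathrm{int}(D^{\ast}M_1))$ is a well-defined Liouville manifold, and it contains $M_0$ as the zero section of its first piece; but that only places $M_0$ in $X$, not in $T^{\ast}M_1$. Since the natural Liouville skeleton of $X$ is $M_0$ while that of $T^{\ast}M_1$ is $M_1$, there is no reason these completions should be symplectomorphic, and indeed no Weinstein-uniqueness result applies here --- identifying them up to symplectomorphism is essentially as hard as the nearby Lagrangian conjecture itself. Your closing suggestion to ``perform a localized Lagrangian-disk surgery along $T^{\ast}_{q_1}M_1$'' is not really a surgery in the sense of Lemma~\ref{Lem:KraghTrick}: in that lemma one cuts $T^{\ast}B$ over a disk in the base and reglues by a diffeomorphism of $S^{n-1}$, which changes the smooth structure of the base; cutting out a Weinstein neighborhood of a full cotangent \emph{fibre} (which is non-compact) is a different operation, and it is not clear what one would paste back in, nor why the result would again be $T^{\ast}M_1$.

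The paper avoids any global Weinstein gluing. Instead it takes a Morse function $f$ on $M_0$ and considers the exact Lagrangian $\Gamma_{\lambda df} \subset T^{\ast}M_0$ for $\lambda \gg 0$. For $\lambda$ large this Lagrangian meets the disk bundle only in a union of small disks near the critical points of $f$, and in particular meets $U^{\ast}M_0$ in a union of Legendrian spheres close to fibre spheres; removing these disks produces an exact Lagrangian embedding of $M_0$ minus a finite set into the symplectization of $U^{\ast}M_0$, which is cones on fibre spheres at the negative end. After isotoping all those spheres to lie in a small neighborhood of the single fibre $F$, one pushes the whole configuration forward by the given contactomorphism into the symplectization of $U^{\ast}M_1$, and then caps off the negative ends with cotangent fibre disks in $D^{\ast}M_1$. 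This directly yields the desired exact Lagrangian $M_0 \hookrightarrow T^{\ast}M_1$. The key idea you are missing is this ``steep Morse graph'' step: it converts $M_0$ into an object living almost entirely in the symplectization, where the contactomorphism can act on it, rather than trying to transport the entire disk bundle $D^{\ast}M_0$.
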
    

\begin{proof}
Assume that $U^{\ast}M_0$ and $U^{\ast}M_1$ are contactomorphic via a contactomorphism that takes the fiber to the fiber. Let $f$ be a Morse function on $M_0$ and consider the graph of $\lambda df$ for  $\lambda>0$. By taking $\lambda$ large we find that $M_0$ admits a Lagrangian embedding that intersects the unit cotangent bundle along a finite collection of fiber disks. Removing these fiber disks we find that there is a finite subset $X \subset M_0$, and an exact Lagrangian embedding of $M_{0}-X$ into the symplectization of $U^{\ast}M_0$ which agrees with a union of cones on fiber spheres at the negative end. After isotopy we may assume that all the fiber spheres lie in an arbitrarily small neighborhood of a fixed fiber sphere. Applying the contactomorphism we then get an analogous embedding of $M_0-X$ into the symplectization of $U^{\ast}M_1$. Filling the fibers at the negative end then gives an exact Lagrangian embedding of $M_0$ into $T^{\ast}M_1$. The result follows.   
\end{proof}

\begin{Corollary}
If $\Sigma_0$ and $\Sigma_{1}$ are homotopy $n$-spheres, $n$ odd, and if $[\Sigma_0]\ne[\pm \Sigma_{1}]\in\theta_n/bP_{n+1}$ then the pairs $(U^{\ast}\Sigma_0,F)$ and $(U^{\ast}\Sigma_1,F)$ are not contactomorphic. \qed
\end{Corollary}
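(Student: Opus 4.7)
The plan is to combine Theorem \ref{t:unit+fiber} with Lemma \ref{Lem:KraghTrick} and the main results of \cite{Abouzaid} and Theorem \ref{thm:4k-1} in a short chain of implications. Suppose, toward contradiction, that a contactomorphism $(U^{\ast}\Sigma_0,F)\cong(U^{\ast}\Sigma_1,F)$ exists. The contrapositive of Theorem \ref{t:unit+fiber} then produces an exact Lagrangian embedding $\iota\colon \Sigma_0\hookrightarrow T^{\ast}\Sigma_1$.

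Next I would feed $\iota$ into Lemma \ref{Lem:KraghTrick}. Fix an orientation on $\Sigma_1$ and take the induced orientation on $\Sigma_0$, so that the projection $T^{\ast}\Sigma_1\to\Sigma_1$ restricts to a degree $+1$ map on $\iota(\Sigma_0)$. The lemma then yields an exact Lagrangian embedding of a homotopy sphere $\Sigma''$ into $T^{\ast}S^n$ with $[\Sigma'']=[\Sigma_0]-[\Sigma_1]\in\Theta_n$. Since $n\ge 5$ is odd, either $n=4k+1$, in which case Abouzaid's theorem \cite{Abouzaid} gives $[\Sigma'']\in bP_{n+1}$, or $n=4k-1$, in which case Theorem \ref{thm:4k-1} gives the same conclusion. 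In either case $[\Sigma_0]\equiv[\Sigma_1]\pmod{bP_{n+1}}$ for this pair of orientations.

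The only remaining step is orientation bookkeeping. The hypothesis $[\Sigma_0]\neq[\pm\Sigma_1]\in\Theta_n/bP_{n+1}$ is insensitive to simultaneous orientation reversal; reversing the chosen orientation of $\Sigma_1$ also reverses the induced orientation on $\Sigma_0$, and hence only negates $[\Sigma_0]-[\Sigma_1]$, giving no new information. However, the orientations with which $\Sigma_0$ and $\Sigma_1$ enter the statement of the corollary need not coincide with the pair produced by Lemma \ref{Lem:KraghTrick}; allowing the given orientation on $\Sigma_0$ to be independent of the induced one, the conclusion is $[\Sigma_0]=\pm[\Sigma_1]\in\Theta_n/bP_{n+1}$, the sign recording whether the given and induced orientations on $\Sigma_0$ agree. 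This contradicts the hypothesis and completes the proof.

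I do not expect any genuine obstacle: the heavy lifting is already done in Theorem \ref{t:unit+fiber} (which reduces the contact problem to a Lagrangian embedding problem via a graph-of-$\lambda df$ construction and fiber-capping), in Lemma \ref{Lem:KraghTrick} (which converts an embedding between cotangent bundles of homotopy spheres into an embedding into $T^\ast S^n$), and in \cite{Abouzaid} together with Theorem \ref{thm:4k-1} (which rule out exotic homotopy spheres outside $bP_{n+1}$ in $T^\ast S^n$). The subtlest point is the orientation accounting above, which is ultimately needed because the Legendrian fiber $F\subset U^\ast\Sigma_i$ carries no canonical orientation while the class $[\Sigma_i]\in\Theta_n$ does.
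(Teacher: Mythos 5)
Your proof is correct and follows essentially the same route the authors intend (they leave the corollary with a bare \verb|\qed| since it is exactly the combination of Theorem~\ref{t:unit+fiber} with Lemma~\ref{Lem:KraghTrick}, Abouzaid's theorem for $n=4k+1$, and Theorem~\ref{thm:4k-1} for $n=4k-1$). Your orientation bookkeeping, explaining where the $\pm$ comes from, is also the right accounting.
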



\subsection{Legendrian spheres} 
We conclude by making a few observations concerning the implications of our results for Legendrian knot theory. Consider the standard handle presentation for the cotangent bundle $T^{\ast}S^{n}$. There is a $0$-handle, which is the ball, with a standard Legendrian sphere in the boundary along which the $n$-handle is attached. The surgery presentation of the cotangent bundle of an exotic sphere is the same except that the parameterization of the knot (that describes how the handle is attached) is different. 

Let $\Lambda$ denote the standard Legendrian unknot of dimension $n$,  in the boundary of the standard ball. Consider two parametrisations $\phi_0$ and $\phi_1$ of $\Lambda$ which differ by a  diffeomorphism $\phi\colon \Lambda\to\Lambda$.  Viewed as a gluing map for two hemispheres, any such $\phi$ defines a class in $\Theta_{n+1}$.  The following result is a non-existence theorem for an isotopy of maps, rather than of submanifolds.

\begin{Corollary}\label{c:paramisotopy}
Suppose $n$ is even. (i)  If $\phi$  is non-trivial in $\Theta_{n+1}/bP_{n+2}$ then the $\phi_i$  are not isotopic through parametrized Legendrian embeddings. 
(ii) The parameterization of the standard unknot induced from any Lagrangian disk filling defines the trivial element in $\Theta_{n+1}/bP_{n+2}$ (when used as the gluing map for a homotopy $(n+1)$-sphere).
\end{Corollary}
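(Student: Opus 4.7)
The plan is to use the standard Weinstein handle calculus. Attaching a Weinstein $(n+1)$-handle to the ball $B^{2n+2}$ along a parametrized Legendrian $n$-sphere in the isotopy class of $\Lambda$ produces a Weinstein manifold whose symplectomorphism type depends only on the parametrized Legendrian isotopy class of the attaching map. I would start by observing that with the standard parametrization $\phi_0$ (the one induced by the standard flat Lagrangian disk filling) one recovers $T^{\ast}S^{n+1}$, whereas with $\phi_1 = \phi_0 \circ \phi$ the core of the attached handle closes up with the standard Lagrangian disk filling according to the clutching construction $D^{n+1} \cup_\phi D^{n+1} = \Sigma_\phi$, so the resulting Weinstein manifold is $T^{\ast}\Sigma_\phi$.

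For part (i), I would suppose for contradiction that $\phi_0$ and $\phi_1 = \phi_0 \circ \phi$ are isotopic through parametrized Legendrian embeddings. Then the two handle attachments yield Weinstein-equivalent, and hence symplectomorphic, manifolds $T^{\ast}S^{n+1}$ and $T^{\ast}\Sigma_\phi$. Since $n$ is even and the hypothesis that $[\Sigma_\phi]$ is non-trivial in $\Theta_{n+1}/bP_{n+2}$ forces $n+1$ to be odd and large enough that Theorem \ref{Thm:One} applies, one concludes $[\Sigma_\phi] = \pm[S^{n+1}] = 0$ in $\Theta_{n+1}/bP_{n+2}$, a contradiction.

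For part (ii), I would fix a Lagrangian disk filling $D \subset B^{2n+2}$ of $\Lambda$ with induced parametrization $\phi_D$, and attach a Weinstein $(n+1)$-handle along $\Lambda$ using the standard parametrization $\phi_0$. The result is $T^{\ast}S^{n+1}$, and the union of $D$ with the Lagrangian core of the attached handle is a closed exact Lagrangian $\Sigma \subset T^{\ast}S^{n+1}$. Smoothly, $\Sigma$ is obtained by gluing two copies of $D^{n+1}$ along their boundaries via $\phi_D^{-1} \circ \phi_0$, so $\Sigma \cong \Sigma_\phi$ with $\phi$ precisely the comparison of $\phi_D$ with the standard parametrization. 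Since $n+1$ is odd, Theorem \ref{thm:4k-1} (when $n+1 \equiv 3 \bmod 4$) or Abouzaid's theorem \cite{Abouzaid} (when $n+1 \equiv 1 \bmod 4$) will then yield $[\Sigma] \in bP_{n+2}$, as required.

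The main obstacle I expect is the handle-calculus verification that attaching with a reparametrized Legendrian $\phi_0 \circ \phi$ produces a Weinstein manifold whose closed Lagrangian core sphere is exactly $\Sigma_\phi$ via the clutching construction, and symmetrically that the Lagrangian core of the handle together with an arbitrary Lagrangian disk filling genuinely extends to a smooth closed Lagrangian across the attaching region. Both statements follow from the standard description of Weinstein handles (see Cieliebak--Eliashberg), but care is needed in matching orientations, trivializations of normal bundles, and the identification of the standard flat Lagrangian disk with the standardly parametrized half-sphere; any sign ambiguity would at most replace $\Sigma_\phi$ by its orientation reverse, which is harmless since $bP_{n+2}$ is closed under inverses.
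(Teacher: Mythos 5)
Your proof is correct and takes essentially the same approach as the paper: for (i), recognising that an isotopy of parametrised Legendrians gives a symplectomorphism between $T^{\ast}S^{n+1}$ and $T^{\ast}\Sigma_{\phi}$ constructed by Weinstein handle attachment, then invoking Theorem~\ref{Thm:One}; for (ii), capping the Lagrangian disk filling with the handle core to get an exact Lagrangian homotopy sphere $\Sigma_{\phi}$ in $T^{\ast}S^{n+1}$, then invoking Theorem~\ref{thm:4k-1} or Abouzaid's result. Your write-up merely spells out the handle-calculus details and the clutching identification which the paper leaves implicit.
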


\begin{proof}
An isotopy connecting the two embeddings would give a symplectomorphism between the cotangent bundles, viewed as having been constructed by handle attachment, in contradiction to Theorem \ref{Thm:One}. A Lagrangian disk which induces a non-trivial parameterization would give an exact Lagrangian embedding of an exotic sphere after handle attachement.
\end{proof}

Theorem \ref{t:nearbyparam} on re-parameterized 0-sections of cotangent bundles follows from Corollary \ref{c:paramisotopy}.
\begin{proof}[Proof of Theorem \ref{t:nearbyparam}]
A Hamiltonian isotopy in $T^{\ast} S^{n}$ lifts to a contact isotopy of the $1$-jet space $J(S^n)  =T^{\ast}S^{n}\times \R$.  Suppose for contradiction that, in the setting of Theorem \ref{t:nearbyparam}, there is a Hamiltonian isotopy connecting the maps $\iota$ and $\iota\circ\phi$.  After scaling the $\R$-coordinate and the fiber coordinates in $T^{\ast}S^{n}$ we may assume that the induced Legendrian isotopy from the $0$-section $\Lambda = S^n \subset J(S^n)$ parameterized by $\iota$ to that parameterized by $\iota\circ\phi$ lies in an arbitrarily small neighborhood of $\Lambda$.   Since any Legendrian sphere has a neighborhood contactomorphic to a neighborhood of the 0-section in the 1-jet space,  such an isotopy contradicts Corollary \ref{c:paramisotopy}. 
\end{proof}

One may consider analogues of Corollary \ref{c:paramisotopy} for the fiber sphere $F\subset U^{\ast}\Sigma$, where $\Sigma$ is an exotic sphere. View $\Sigma$ as the union of two disks $D_+$ and $D_-$ glued by a diffeomorphism $\phi$ and consider the isotopy which flows $F$ over $D_{+}$ radially, then shrinks it radially over $D_{-}$, and finally moves it back to its initial position along a path in the base. It is easy to see that this isotopy changes the original parameterization by $\phi$. In general, wrapping many times around the sphere we can change it by any multiple of $\phi$. The trace of this isotopy is a Lagrangian cylinder that gives a cobordism from the fiber to the fiber, and  by filling we find that there are Lagrangian disks bounding the fiber in the cotangent bundle of an exotic sphere that do not induce  the standard parameterization. It is an open question whether there are such isotopies changing the parameterization of the fiber in the unit cotangent bundle of the standard sphere (this is closely related to the question raised in Lemma \ref{lem:s-cobordism} as to whether an exotic sphere admits a Lagrangian embedding into the $A_2$-plumbing  of cotangent bundles of the standard sphere).

\bibliographystyle{alpha}

\end{document}